\newcommand{\bb}{\mathbb}
\newcommand{\CC}{\bb C}
\newcommand{\h}{\bb H}
\newcommand{\Z}{\bb Z}
\newcommand{\ZZ}{\bb Z}
\newcommand{\R}{\bb R}
\newcommand{\RR}{\bb R}
\newcommand{\NN}{\bb N}
\newcommand{\Ai}{\operatorname{Ai}}
\newcommand{\D}{\mathscr{D}}
\newcommand{\Ce}{\mathscr{C}}
\newcommand{\eL}{\mathcal{L}}
\newcommand{\Ga}{\Gamma}
\newcommand{\wrt}[1]{\mathrm{d}{#1}}
\newcommand{\PSL}{\operatorname{PSL}}
\newcommand{\im}{\operatorname{Im}}
\newtheorem{Theorem}{Theorem}
\numberwithin{Theorem}{section}
\newtheorem{coro}[Theorem]{Corollary}
\newtheorem{theo}[Theorem]{Theorem}
\newtheorem{prop}[Theorem]{Proposition}
\newtheorem{lemm}[Theorem]{Lemma}
\renewcommand{\Im}{\operatorname{Im}}
\renewcommand{\Re}{\operatorname{Re}}
\newtheorem*{lemma*}{Lemma}
\newtheorem*{question*}{Question}
\newtheorem*{theorem*}{Theorem}
\theoremstyle{remark}
\newtheorem{rema}[Theorem]{\sc Remark}
\newtheorem*{rema*}{\sc Remark}
\numberwithin{equation}{section}
\begin{document}

\title[Eisenstein series and the $K$-Bessel function]{Eisenstein series and an asymptotic for the $K$-Bessel function}
\author{Jimmy Tseng}
\thanks{The author was supported by EPSRC grant EP/T005130/1.}
\address{Department of Mathematics, University of Exeter, Exeter, EX4 4QF, UK}
\email{j.tseng@exeter.ac.uk}
 \keywords{Bessel functions, asymptotic expansions, uniform asymptotic expansions, Eisenstein series, bounds}
 \subjclass[2010]{41A60, 33C10, 11M36}
\begin{abstract}  We produce an estimate for the $K$-Bessel function $K_{r  + i t}(y)$ with positive, real argument $y$ and of large complex order $r+it$ where $r$ is bounded and $t = y \sin \theta$ for a fixed parameter $0\leq \theta\leq \pi/2$ or $t= y \cosh \mu$ for a fixed parameter $\mu>0$.  In particular, we compute the dominant term of the asymptotic expansion of $K_{r  + i t}(y)$ as $y \rightarrow \infty$.  When $t$ and $y$ are close (or equal), we also give a uniform estimate.

As an application of these estimates, we give bounds on the weight-zero (real-analytic) Eisenstein series $E_0^{(j)}(z, r+it)$ for each inequivalent cusp $\kappa_j$ when $1/2 \leq r \leq 3/2$.



%
%

\end{abstract}

\maketitle
\tableofcontents

\section {Introduction}\label{secBackground}



The $K$-Bessel function, $K_{r  + i t}(y)$, (see~\ref{eqnKInTermsOfI} for the definition) appears in a number of ways in mathematics such as in the Fourier expansion of Eisenstein series (for background on Eisenstein series, see~\cite{Ku} for example), and these series are important automorphic functions (namely, functions invariant under a cofinite Fuchsian group) because they are eigenfunctions of the non-Euclidean Laplacian (i.e. the operator $D:=y^2\left(\frac{\partial^2}{\partial x^2}+ \frac{\partial^2}{\partial y^2} \right)$).\footnote{Other names for the $K$-Bessel function also exist in the literature such as, for example, the Macdonald-Bessel function, the modified Bessel function of the second kind and the modified Bessel function of the third kind.}  In this paper, we will produce bounds for $K_{r  + i t}(y)$ in the novel case of positive, real argument $y$ and of large complex order $r+it$ where $r$ is bounded and $t$ varies linearly with $y$ in all possible ways.  In particular, we compute the dominant term of the asymptotic expansion of $K_{r  + i t}(y)$ as $y \rightarrow \infty$ for the two cases $t = y \sin \theta$ for a fixed parameter $0\leq \theta\leq \pi/2$ (Theorem~\ref{thmFirstCaseMonoKBessel}) or $t= y \cosh \mu$ for a fixed parameter $\mu>0$ (Theorem~\ref{thmSecondCaseOscillKBessel}).  (The case $t<0$ is also handled as Remark~\ref{rmkNegativeTIsOK} shows.)  Note that, thus, our result is for $y$ and $t$ both approaching infinity.  Except for the case of $\theta=\pi/2$, we prove Theorems~\ref{thmFirstCaseMonoKBessel} and~\ref{thmSecondCaseOscillKBessel} using Laplace's method (see~\cite[Page~127, Theorem~7.1]{Olv} or~\cite{Nem} for example) in Section~\ref{subsecBndsBessel}.

Theorems~\ref{thmFirstCaseMonoKBessel} and~\ref{thmSecondCaseOscillKBessel} are nonuniform results.  In the case where $t$ and $y$ are nearly equal, we will find that there are two relevant saddle points.  When $t/y$ approach $1$, the two saddle points coalesce and these results go to infinity.  Consequently, a uniform result in this case is highly desirable.  We give such a uniform result (Theorems~\ref{thmFirstCaseMonoKBesselUnif} and~\ref{thmSecondCaseOscillKBesselUnif}) in Section~\ref{subsecBndsBesselUnif}.  The uniform result is important not only for the completeness of the estimates for the $K$-Bessel function but also for applications.


One such application for estimates on $K$-Bessel functions is the study of Eisenstein series.  As an application of our results, we will, in Section~\ref{secBndEisenstein}, give bounds on the weight-zero Eisenstein series $E_0^{(j)}(z, r+it)$ for each inequivalent cusp $\kappa_j$ when $1/2 < r \leq 3/2$.  (For the case $r=1/2$, we will use known estimates on the $K$-Bessel function, $K_{i t}(y)$, to give bounds on the Eisenstein series, $E_0^{(j)}(z, 1/2+it)$.)  Already, our nonuniform results suffice to give bounds on the Fourier coefficients of these Eisenstein series (Theorem~\ref{lemmBndSumcnsquares}) when $1/2 < r \leq 3/2$.  However, to bound the Eisenstein series themselves (Theorem~\ref{propEisenBiggerHalf}), it is necessary, when $1/2 < r \leq 3/2$, to use our uniform results.

\subsection{Statement of results}

Let $\nu := r +it$.  Our first two results (Theorems~\ref{thmFirstCaseMonoKBessel} and~\ref{thmSecondCaseOscillKBessel}) together give an asymptotic for $K_{\nu}(y)$ for large order but bounded $r$ (so $|t|$ grows to infinity) and positive, real argument $y$.  There are two cases:  $y \geq t \geq 0$ (Theorem~\ref{thmFirstCaseMonoKBessel}) and $0 < y < t$ (Theorem~\ref{thmSecondCaseOscillKBessel}).  When $t <0$, see Remark~\ref{rmkNegativeTIsOK}.

We note that more terms of the asymptotic expansions found in Theorems~\ref{thmFirstCaseMonoKBessel},~\ref{thmSecondCaseOscillKBessel},~\ref{thmFirstCaseMonoKBesselUnif},~and~\ref{thmSecondCaseOscillKBesselUnif} could be computed using the techniques in this paper; however, these computations quickly become tedious and are omitted.

\begin{theo}\label{thmFirstCaseMonoKBessel}
Let $M\geq0$ and $0\leq\theta \leq \pi/2$ be fixed real numbers.  Let $|r|\leq M$, $0< y \in \RR$, and \begin{align}
t  = y \sin \theta.\end{align}  Then \begin{align*}
K_\nu(y) = \begin{cases} \sqrt{\frac{\pi}{2 y \cos \theta}}e^{-y (\cos \theta +\theta \sin \theta )}e^{ir\theta} +O\left(y^{-3/2}e^{-y (\cos \theta +\theta \sin \theta )} \right)& \text{ if } 0 \leq \theta < \frac \pi 2 \\ e^{-\frac \pi 2 y +i\frac \pi 2 r}y^{-1/3}\frac{\Gamma(\frac 1 3)} {2^{\frac 2 3}3^{\frac 1 6}} +O\left(y^{-2/3}e^{-\frac \pi 2 y +i\frac \pi 2 r} \right)& \text{ if } \theta = \frac \pi 2\end{cases}  \end{align*} as $y \rightarrow \infty$.  Here, the implied constants depend on $\theta$ and $M$ for the case $0 \leq \theta < \frac \pi 2$ and on $M$ for the case $\theta = \frac\pi 2$.
\end{theo}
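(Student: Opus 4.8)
The starting point is the classical integral representation
\[
K_\nu(y) = \frac12\int_{-\infty}^{\infty} e^{-y\cosh s}\,e^{\nu s}\,ds ,
\]
valid for every $\nu\in\CC$ because $y>0$. With $t = y\sin\theta$ this is $K_\nu(y) = \tfrac12\int_{-\infty}^{\infty} e^{-y\phi(s)}e^{rs}\,ds$, where $\phi(s) := \cosh s - i s\sin\theta$ is entire, and the plan is a saddle-point analysis. Since $\phi'(s) = \sinh s - i\sin\theta$ vanishes exactly at $s = i\theta$ and $s = i(\pi-\theta)$, set $s_0 := i\theta$; one computes $\phi(s_0) = \cos\theta + \theta\sin\theta$, $\phi''(s_0) = \cos\theta$, $\phi'''(s_0) = i\sin\theta$, and the amplitude at the saddle is $e^{r s_0} = e^{i r\theta}$ — precisely the exponent and phase appearing in the statement.

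For $0\le\theta<\pi/2$ I would, by Cauchy's theorem, deform the line $\Im s = 0$ up to the steepest descent contour $\Ga$ through $s_0$, i.e. the component through $s_0$ of $\{\,\Im\phi(s)=0\,\}$; one checks $\Ga$ is a curve $\Im s = \beta(\sigma)$ with $\beta(0)=\theta$ and $\beta(\sigma)\to 0$ as $|\sigma|\to\infty$, lying strictly below the line $\Im s=\pi-\theta$ of the second saddle, so no singularity is crossed. The deformation is legitimate because $\Re\phi(\sigma+i\beta) = \cosh\sigma\cos\beta + \beta\sin\theta \ge \cosh\sigma\cos\theta$ for $\beta\in[0,\theta]$ (using $\theta<\pi/2$), which kills the integrand on the vertical segments $\Re s=\pm R$ as $R\to\infty$. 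On $\Ga$ we have $\Im\phi\equiv 0$ and $\Re\phi$ attains its strict minimum $\phi(s_0)$ at $s_0$, so $v := \sgn(\sigma)\sqrt{\phi(s)-\phi(s_0)}$ is a real-analytic coordinate on $\Ga$ ranging over all of $\RR$, and
\[
K_\nu(y) = \frac{e^{-y\phi(s_0)}}{2}\int_{-\infty}^{\infty} e^{-yv^2}\,G(v)\,dv, \qquad G(v) := e^{r s(v)}\,\frac{ds}{dv}.
\]
From $\phi(s)-\phi(s_0) = \tfrac12\phi''(s_0)(s-s_0)^2(1+O(s-s_0))$ one gets $G(0) = e^{i r\theta}\sqrt{2/\cos\theta}$. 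Applying Laplace's method (cf.~\cite[p.~127, Thm.~7.1]{Olv}, e.g.\ to the two halves $v>0$ and $v<0$ separately) gives $\int_{-\infty}^{\infty} e^{-yv^2}G(v)\,dv = G(0)\sqrt{\pi/y} + O(y^{-3/2})$; the point is that the would-be $O(y^{-1})$ term, governed by $G'(0)$, integrates an odd function against the even weight $e^{-yv^2}$ and so vanishes, which is exactly what yields an error of order $y^{-3/2}$. Substituting $G(0)$ and $\phi(s_0)$ produces the claimed asymptotic. Uniformity over $|r|\le M$ is routine: $|e^{rs}|\le e^{M|s|}$ on $\Ga$, which is of polynomial size in $v$ and dominated by the Gaussian away from $s_0$, while on a fixed neighbourhood of $s_0$ the derivatives of $G$ are bounded uniformly in $r$.

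For $\theta=\pi/2$ Laplace's method fails, since $s_0=i\pi/2$ is a cubic saddle: $\phi'(s_0)=\phi''(s_0)=0$ but $\phi'''(s_0)=i\ne0$, while $\phi(s_0)=\pi/2$ and $e^{r s_0}=e^{i\pi r/2}$. Here I would again deform to the steepest descent contour through $s_0$ (on which $\phi(s)-\tfrac\pi2\ge 0$ is real) and use $\phi(s)-\tfrac\pi2 = \tfrac{i}{6}(s-i\tfrac\pi2)^3\bigl(1+O(s-i\tfrac\pi2)\bigr)$; the rescaling $w=(y/6)^{1/3}(s-i\tfrac\pi2)$ turns the leading integral into a standard cubic-exponential (Airy/Gamma-type) integral, and summing the two relevant steepest descent rays at the saddle gives $\int e^{-w^3}\,dw$-type contributions evaluating to multiples of $\Gamma(1/3)$; collecting the constants yields $e^{-\pi y/2+i\pi r/2}y^{-1/3}\,\Gamma(1/3)/(2^{2/3}3^{1/6})$, with the $O(y^{-2/3})$ error coming from the quartic term $\phi^{(4)}(s_0)(s-s_0)^4/24$. (Alternatively, one may cite the known Airy-type asymptotics for $K_\nu$ of nearly equal order and argument.)

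The routine parts are the saddle computations and the exponential decay bounds justifying the deformations. The genuine care is needed in two places: (a) confirming, for $0\le\theta<\pi/2$, that the steepest descent contour $\Ga$ is globally a well-behaved curve on which $\phi-\phi(s_0)$ is a bona fide real coordinate, and that the resulting Laplace estimate has an error uniform in $|r|\le M$ rather than merely pointwise in $r$; and (b) the cubic-saddle bookkeeping at $\theta=\pi/2$, where keeping track of the steepest-descent directions and pinning down the exact constant $\Gamma(1/3)/(2^{2/3}3^{1/6})$ is the most delicate step. Note that this is precisely the value at which the $0\le\theta<\pi/2$ formula degenerates — $\sqrt{\pi/(2y\cos\theta)}\to\infty$ as $\theta\to\pi/2$ — reflecting the coalescence of the two saddles and motivating the separate uniform treatment announced in the introduction.
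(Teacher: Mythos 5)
Your argument is correct and follows essentially the same route as the paper: both use the integral representation $K_\nu(y)=\tfrac12\int e^{-y(\cosh R - iR\sin\theta)}e^{rR}\,dR$ and apply Laplace's method on the steepest descent contour through the saddle $i\theta$ for $0\le\theta<\pi/2$, with the same saddle data and the same justification of the contour deformation. The only divergence is at $\theta=\pi/2$, where the paper simply quotes Watson's formulas for the cubic saddle rather than carrying out the Airy-type computation you sketch (you also offer the citation alternative); both give the stated constant $\Gamma(\tfrac13)/(2^{2/3}3^{1/6})$.
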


\begin{rema}
In the special case of purely imaginary order, our result agrees with standard results.  As examples, see \cite[Page 87 (18)]{EMOT}~and~\cite[(14)]{BST} for the case $0 < \theta < \frac \pi 2$ and~\cite[Pages~78,~247]{Wa}~and~\cite[(14)]{BST} for the case $\theta = \frac \pi 2$.  Also, note that the $\Gamma$ in the statement of the theorem refers to the gamma function.

\end{rema}

\begin{theo}\label{thmSecondCaseOscillKBessel}
Let $M \geq0$ and $\mu > 0$ be fixed real numbers.  Let $|r|\leq M$, $0< y \in \RR$, and \begin{align}
t  = y \cosh \mu.\end{align}  Then \begin{align*}
K_\nu(y) = \sqrt{\frac{2\pi}{y \sinh \mu}}e^{-y \frac \pi 2 \cosh \mu +i r \frac \pi 2}& \left[\cosh(r \mu) \sin\left(\frac \pi 4 - y \left(\sinh \mu  - \mu \cosh \mu \right)\right) \right. \\ &\left.\quad  -i \sinh(r \mu)\cos\left(\frac \pi 4 - y \left(\sinh \mu  - \mu \cosh \mu \right)\right)\right] \\ &+O\left(y^{-3/2}e^{-y\left( \frac \pi 2 \cosh \mu + i \left(\sinh \mu  - \mu \cosh \mu \right)\right)} \right) \\&+O\left(y^{-3/2}e^{-y\left( \frac \pi 2 \cosh \mu - i \left(\sinh \mu  - \mu \cosh \mu \right)\right)} \right)
\end{align*} as $y \rightarrow \infty$.  Here, the implied constants depend on $\mu$ and $M$.
\end{theo}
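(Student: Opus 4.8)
The plan is to apply the method of steepest descents, with $y$ as the large parameter, to the standard representation
\[
K_\nu(y)=\frac12\int_{-\infty}^{\infty}e^{-y\cosh s}\,e^{\nu s}\,\wrt{s}=\frac12\int_{-\infty}^{\infty}e^{rs}\,e^{-y\varphi(s)}\,\wrt{s},\qquad \varphi(s):=\cosh s-is\cosh\mu,
\]
which is valid for $y>0$ and all $\nu\in\CC$; here we have inserted $\nu=r+iy\cosh\mu$. Since $|t|=y\cosh\mu\to\infty$ together with $y$, the entire exponential weight is carried by $e^{-y\varphi(s)}$, while $e^{rs}$ is a $y$-independent amplitude.

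First I would locate the saddle points. Solving $\varphi'(s)=\sinh s-i\cosh\mu=0$ and using $\cosh\mu>1$, writing $s=u+iv$ one finds that the saddles met by an upward deformation of the real axis into the strip $0\le\Im s\le\tfrac\pi2$ are exactly $s_\pm=\pm\mu+i\tfrac\pi2$, where
\[
\varphi(s_\pm)=\tfrac\pi2\cosh\mu\pm i\bigl(\sinh\mu-\mu\cosh\mu\bigr),\qquad \varphi''(s_\pm)=\cosh s_\pm=\pm i\sinh\mu\neq 0.
\]
Both are simple saddles, and they are distinct precisely because $\mu>0$; this is why the nonuniform argument succeeds here and degenerates as $\mu\to0^{+}$. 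Note that $\Re\varphi(s_+)=\Re\varphi(s_-)=\tfrac\pi2\cosh\mu$, which accounts for the two exponentials of equal modulus in the statement, while the opposite signs of the imaginary parts produce the oscillation.

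Next I would deform $\RR$ onto the union of the steepest-descent contours through $s_-$ and $s_+$, joined to the real axis near $\pm\infty$. This is legitimate by Cauchy's theorem: $e^{rs-y\varphi(s)}$ is entire, these are the only saddles encountered, and along the deformed contour $\Re\varphi$ attains its minimum value $\tfrac\pi2\cosh\mu$ exactly at $s_\pm$ and increases away from them, so the connecting arcs and the off-saddle portions are of strictly smaller order, uniformly for $|r|\le M$. On each steepest-descent piece I would perform the change of variable $\varphi(s)-\varphi(s_\pm)=\tau^{2}$ with $\tau$ real, converting it to a real Laplace-type integral to which \cite[Page~127, Theorem~7.1]{Olv} applies; its leading term is
\[
\frac12\,e^{rs_\pm}\,e^{-y\varphi(s_\pm)}\sqrt{\frac{2\pi}{y\,\varphi''(s_\pm)}}\,\bigl(1+O(1/y)\bigr),
\]
the branch of the square root being fixed by the orientation of the path, so that $\sqrt{2\pi/(y\,\varphi''(s_\pm))}=e^{\mp i\pi/4}\sqrt{2\pi/(y\sinh\mu)}$.

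Finally I would add the two contributions. Pulling out the common factor $\sqrt{2\pi/(y\sinh\mu)}\,e^{-y\frac\pi2\cosh\mu+ir\pi/2}$, the remaining bracket is
\[
e^{r\mu}e^{-iy(\sinh\mu-\mu\cosh\mu)}e^{-i\pi/4}+e^{-r\mu}e^{iy(\sinh\mu-\mu\cosh\mu)}e^{i\pi/4},
\]
and Euler's formula rewrites this as $\cosh(r\mu)\sin\!\bigl(\tfrac\pi4-y(\sinh\mu-\mu\cosh\mu)\bigr)-i\sinh(r\mu)\cos\!\bigl(\tfrac\pi4-y(\sinh\mu-\mu\cosh\mu)\bigr)$, which is the asserted main term; the two next-order Laplace remainders become the errors $O\!\bigl(y^{-3/2}e^{-y(\frac\pi2\cosh\mu\pm i(\sinh\mu-\mu\cosh\mu))}\bigr)$. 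The step I expect to be the main obstacle is the rigorous, uniform control of the deformation: the natural candidate contour $\Im s=\tfrac\pi2$ is a line of constant $\Re\varphi$ on which $s_\pm$ appear as stationary-phase points of $\Im\varphi$ but on which $e^{rs}$ does not decay, so one must instead follow genuine steepest-descent paths that peel off into $0\le\Im s<\tfrac\pi2$, estimate the off-saddle portions uniformly in $|r|\le M$, and fix the square-root branch from the geometry. The dependence of the implied constants on $\mu$ and $M$ only then follows because, on the relevant part of the contour, $e^{rs}$ is a bounded perturbation of the amplitude with bounded derivatives, uniformly in $|r|\le M$.
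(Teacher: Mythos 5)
Your proposal follows essentially the same route as the paper: the same integral representation with phase $\varphi(s)=\cosh s-is\cosh\mu$, the same pair of saddles $s_\pm=\pm\mu+i\pi/2$, deformation onto the steepest-descent contour (the paper simply cites Temme's explicit paths for this step), Laplace's method via the same reference in Olver, and addition of the two saddle contributions. The only blemish is a dropped factor of $\tfrac12$ when you "pull out the common factor" before applying Euler's formula (the $\tfrac12$ from the integral representation must be retained there), but the final expression you state is the correct one.
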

\begin{rema}\label{rmkClassicalResultCase2ForImagOrder}
Since $y \sinh \mu = \sqrt{t^2 - y^2}$ and $\mu = \cosh^{-1}\left( \frac t y\right)$ hold, our result, in the special case of purely imaginary order, reduces to the standard result for purely imaginary order (see~\cite[Page~88~(19)]{EMOT} for example), namely: \[K_{it}(y) \sim \sqrt{2\pi}(t^2 - y^2)^{-\frac 1 4}e^{-t \frac \pi 2}\sin\left(\frac \pi 4 - (t^2 - y^2)^{\frac 1 2}  + t \cosh^{-1}\left( \frac t y\right) \right),\]as $y \rightarrow \infty$.  

\end{rema}

Our next two results (Theorems~\ref{thmFirstCaseMonoKBesselUnif} and~\ref{thmSecondCaseOscillKBesselUnif}) give a {\em uniform} asymptotic for $K_{\nu}(y)$ for large order but bounded $r$ and positive, real argument $y$ in the case where $t$ and $y$ are nearly equal (or equal).   Here, there are also two cases:  $y \geq t \geq 0$ (Theorem~\ref{thmFirstCaseMonoKBesselUnif}) and $0 < y < t$ (Theorem~\ref{thmSecondCaseOscillKBesselUnif}).  When $t <0$, see Remark~\ref{rmkNegativeTIsOK}.  Note that $\Ai(\cdot)$ is the Airy function.

\begin{theo}\label{thmFirstCaseMonoKBesselUnif}
Let $M\geq0$ and $0< \theta \leq \frac \pi 2$ be real numbers.  Let $|r|\leq M$, $0< y \in \RR$, and \begin{align}
t  = y \sin \theta.\end{align}  Then there exists a (small) $\theta_0>0$, which does not depend on $t$ or $y$, such that, for all $\frac \pi 2 - \theta_0 \leq \theta \leq \frac \pi 2$, we have  \begin{align*}
K_\nu(y) &=  \ \frac {\pi \sqrt 2}{y^{1/3}} e^{-y \frac \pi 2 \sin \theta + i r \frac \pi 2} \cos\left(r \theta - r \frac \pi 2\right) \left(\frac \zeta {\cos^2 \theta} \right)^{1/4} \Ai\left( y^{2/3} \zeta \right) \\ & - \frac{i \pi \sqrt 2}{y^{2/3}} e^{-y \frac \pi 2 \sin \theta + i r \frac \pi 2} \sin\left(r \theta - r \frac \pi 2\right)\zeta^{-1/2} \left(\frac \zeta {\cos^2 \theta} \right)^{1/4} \Ai'\left( y^{2/3} \zeta \right) \\ &+ O\left(\frac{\Ai\left( y^{2/3} \zeta \right) e^{-y \frac \pi 2 \sin \theta }}{y^{4/3}}\right) + O\left(\frac{\Ai'\left( y^{2/3} \zeta \right)e^{-y \frac \pi 2 \sin \theta}}{y^{5/3}}\right) \end{align*} as $y \rightarrow \infty$.  Here $\zeta = \left[\frac 3 2  \left(\theta \sin \theta + \cos \theta - \frac \pi 2 \sin \theta \right) \right]^{2/3}$ is a nonnegative real number and the implied constants depend on $\theta_0$ and $M$.

\end{theo}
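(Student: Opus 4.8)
The plan is to start from an integral representation of $K_\nu(y)$ amenable to saddle-point analysis, namely the one coming from the substitution used (implicitly) in the proofs of Theorems~\ref{thmFirstCaseMonoKBessel} and~\ref{thmSecondCaseOscillKBessel}. Writing $\nu = r + it$ with $t = y\sin\theta$, one has a representation of the shape
\begin{align*}
K_\nu(y) = \tfrac12 e^{ir\pi/2}\int_{-\infty}^{\infty} e^{-y\,\phi_\theta(u)}\, e^{iru}\,\wrt u,
\qquad \phi_\theta(u) = \cosh u - i\,(\sin\theta)\,u,
\end{align*}
(possibly after a contour shift), where the factor $e^{iru}$ is the "slowly varying" amplitude of bounded order. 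The phase $\phi_\theta$ has saddle points where $\sinh u = i\sin\theta$, i.e. at $u = \pm i(\tfrac\pi2 - \theta) + O(\cdots)$ near $\theta = \pi/2$; these two saddles coalesce as $\theta \to \pi/2$. This is exactly the regime in which a uniform (Airy-type) asymptotic is forced, and the natural tool is the Chester--Friedman--Ursell method (the cubic-transformation refinement of Laplace's/the steepest-descent method); I would cite Olver's treatment of uniform asymptotics via the Airy function rather than re-deriving it.

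The key steps, in order: (1) Fix the integral representation and verify that the two saddle points of $\phi_\theta$ are the claimed ones, computing $\phi_\theta$ at the saddles; the quantity $\theta\sin\theta + \cos\theta - \tfrac\pi2\sin\theta$ should appear as (a multiple of) the difference of the critical values, which is why $\zeta = \bigl[\tfrac32(\theta\sin\theta+\cos\theta-\tfrac\pi2\sin\theta)\bigr]^{2/3}$ is the correct Airy variable. (2) Introduce the cubic change of variables $\phi_\theta(u) = \tfrac13 w^3 - \zeta w + A(\theta)$ mapping a neighbourhood of the two saddles to a neighbourhood of $w = \pm\sqrt\zeta$; check it is analytic and invertible for $\theta$ in a small left-neighbourhood of $\pi/2$ (this is where $\theta_0$ enters — one needs the map nondegenerate uniformly), and record the Jacobian $\wrt u/\wrt w$ and its value at the saddles, which produces the prefactor $(\zeta/\cos^2\theta)^{1/4}$. (3) Expand the transported amplitude $e^{iru}\,\wrt u/\wrt w$ along the contour; because $r$ is bounded, $e^{iru}$ contributes $\cos(r\theta - r\pi/2)$ to the even part and $\sin(r\theta - r\pi/2)$ to the odd part when evaluated symmetrically at the two saddles $u = \pm i(\pi/2-\theta)$, which is the source of the two trigonometric factors in the statement. (4) Feed this into the standard uniform-asymptotics lemma: the even part of the amplitude pairs with $\Ai(y^{2/3}\zeta)$ at order $y^{-1/3}$, the odd part with $\Ai'(y^{2/3}\zeta)$ at order $y^{-2/3}$, and the remainders are one power of $y^{-1}$ smaller, giving the $O\bigl(\Ai(y^{2/3}\zeta)e^{-y\pi\sin\theta/2}/y^{4/3}\bigr)$ and $O\bigl(\Ai'(y^{2/3}\zeta)e^{-y\pi\sin\theta/2}/y^{5/3}\bigr)$ error terms. (5) Finally, reinstate the global exponential factor $e^{-y\frac\pi2\sin\theta + ir\pi/2}$ coming from $A(\theta)$ and the $e^{ir\pi/2}$ in the representation, and check that as $\theta\to\pi/2$ (so $\zeta\to 0$) the formula degenerates correctly to the $\theta=\pi/2$ case of Theorem~\ref{thmFirstCaseMonoKBessel}, using $\Ai(0) = 3^{-2/3}/\Gamma(2/3)$ and $\Ai'(0) = -3^{-1/3}/\Gamma(1/3)$, and as $\theta$ decreases away from $\pi/2$ (so $y^{2/3}\zeta \to \infty$) it matches the non-uniform Theorem~\ref{thmFirstCaseMonoKBessel} via the asymptotics of $\Ai$ at $+\infty$ — these consistency checks also pin down all the constants ($\pi\sqrt2$, the powers of $2$ and $3$).

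The main obstacle will be Step~(2)–(3): establishing that the cubic transformation is analytic and, crucially, \emph{uniformly} well-behaved (bounded Jacobian, controlled inverse) on a fixed neighbourhood of the saddles for all $\theta \in [\pi/2 - \theta_0, \pi/2]$, and tracking the amplitude expansion carefully enough that the error terms come out with the stated shape $\Ai(\cdot)/y^{4/3}$ and $\Ai'(\cdot)/y^{5/3}$ rather than something weaker. One must also justify deforming the original contour onto the union of the two steepest-descent paths through the coalescing saddles and control the tails (the parts of the contour away from the saddles) — these contribute exponentially smaller terms, but this needs to be said. The bounded-order factor $e^{iru}$ is harmless analytically since $|r|\le M$, but it does require keeping the expansion to one extra order to see that it only perturbs the amplitude at the level absorbed into the error terms. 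Everything else (the exact algebra producing $\zeta$, $(\zeta/\cos^2\theta)^{1/4}$, and the trigonometric coefficients) is a routine but lengthy computation that I would carry out but not belabor.
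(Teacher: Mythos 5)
Your proposal is correct and follows essentially the same route as the paper: the paper also applies the Chester--Friedman--Ursell cubic transformation to the representation $K_\nu(y)=\frac12\int e^{yF(R)}e^{rR}\,\wrt R$ with coalescing saddles $R_0=i\theta$, $R_1=i(\pi-\theta)$, computes $A(\widetilde\theta)=-\frac\pi2\cos\widetilde\theta$ and $\zeta$ from the critical values, extracts the even/odd amplitude coefficients $p_0,q_0$ (yielding the $\cos(r\widetilde\theta)$ and $\sin(r\widetilde\theta)$ factors and the Jacobian $(\zeta/\cos^2\theta)^{1/4}$, with the same care about the sign of the square root), and invokes CFU's theorem for the $\Ai$, $\Ai'$ terms and error orders, dismissing the contour tails as exponentially small. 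The only cosmetic differences are your normalization of the amplitude as $e^{iru}$ and the added consistency checks against Theorem~\ref{thmFirstCaseMonoKBessel}, which the paper relegates to a remark.
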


\begin{theo}\label{thmSecondCaseOscillKBesselUnif}
Let $M \geq0$ and $\mu \geq  0$ be real numbers.  Let $|r|\leq M$, $0< y \in \RR$, and \begin{align}
t  = y \cosh \mu.\end{align}  Then there exists a (small) $\mu_0>0$, which does not depend on $t$ or $y$, such that, for all $0 \leq \mu \leq \mu_0$, we have  \begin{align*}
K_\nu(y) = & \ \frac {\pi \sqrt 2}{y^{1/3}} e^{-y \frac \pi 2 \cosh \mu + i r \frac \pi 2} \cosh\left(r \mu\right) \left(\frac \zeta {-\sinh^2 \mu} \right)^{1/4} \Ai\left( y^{2/3} \zeta \right) \\ & - \frac{ \pi \sqrt 2}{y^{2/3}} e^{-y \frac \pi 2 \cosh \mu + i r \frac \pi 2} \sinh\left(r \mu\right)\zeta^{-1/2} \left(\frac \zeta {-\sinh^2 \mu} \right)^{1/4} \Ai'\left( y^{2/3} \zeta \right) \\ &+ O\left(\frac{\Ai\left( y^{2/3} \zeta \right) e^{-y \frac \pi 2 \cosh \mu }}{y^{4/3}}\right) + O\left(\frac{\Ai'\left( y^{2/3} \zeta \right)e^{-y \frac \pi 2 \cosh \mu}}{y^{5/3}}\right)\end{align*} as $y \rightarrow \infty$.  Here $\zeta = -\left[\frac 3 2  \left(\mu \cosh \mu - \sinh \mu \right) \right]^{2/3}$ is a nonpositive real number and the implied constants depend on $\mu_0$ and $M$.
\end{theo}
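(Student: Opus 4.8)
The plan is to obtain the uniform expansion by the Chester--Friedman--Ursell method for two coalescing saddle points, in close parallel with the proof of Theorem~\ref{thmFirstCaseMonoKBesselUnif}. Starting from the standard integral representation $K_\nu(y)=\tfrac12\int_{-\infty}^{\infty}e^{-y\cosh s-\nu s}\,\wrt{s}$ (valid for $y>0$) and writing $\nu s=rs+iys\cosh\mu$, one gets $K_\nu(y)=\tfrac12\int_{-\infty}^{\infty}e^{-rs}e^{-y\phi(s)}\,\wrt{s}$ with $\phi(s):=\cosh s+is\cosh\mu$, so that $y$ is the large parameter and $e^{-rs}$ the slowly varying amplitude. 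The saddle points solve $\phi'(s)=\sinh s+i\cosh\mu=0$; since $\cosh\mu\ge1$, the relevant solutions are $s_\pm=\pm\mu-i\tfrac\pi2$, which coalesce at $-i\tfrac\pi2$ as $\mu\to0^+$, with $\phi''(s_\pm)=\cosh s_\pm=\mp i\sinh\mu\to0$. The vanishing of $\phi''$ is exactly why the Laplace-type argument used for Theorem~\ref{thmSecondCaseOscillKBessel} degenerates here and an Airy approximation is needed.

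The next step is the cubic change of variable $\phi(s)=\tfrac13u^3-\zeta u+A$, with $\zeta=\zeta(\mu)$ and $A=A(\mu)$ chosen so that $s_\pm\mapsto u=\pm\zeta^{1/2}$. Equating values at the saddles forces $A=\tfrac12\bigl(\phi(s_+)+\phi(s_-)\bigr)$ and $\tfrac43\zeta^{3/2}=\phi(s_-)-\phi(s_+)$; using $\cosh(\pm\mu-i\tfrac\pi2)=\mp i\sinh\mu$ one finds $\phi(s_\pm)=\tfrac\pi2\cosh\mu\pm i(\mu\cosh\mu-\sinh\mu)$, hence $A=\tfrac\pi2\cosh\mu$ and, with the branch of $\zeta^{3/2}$ matched to the orientation of the steepest paths, $\zeta=-\bigl[\tfrac32(\mu\cosh\mu-\sinh\mu)\bigr]^{2/3}\le0$, exactly as in the statement. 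The analytic heart of the matter is the Chester--Friedman--Ursell lemma: the map $s\mapsto u$ so defined is analytic and univalent on a fixed neighborhood of $-i\tfrac\pi2$, uniformly for $\mu\in[0,\mu_0]$ with $\mu_0$ small, including the degenerate value $\mu=0$. Granting this, one deforms the line of integration onto the steepest-descent paths through $s_\pm$ (a single path through $-i\tfrac\pi2$ when $\mu=0$), transports to the $u$-plane, and checks that the part of the contour lying outside the neighborhood contributes an error exponentially smaller than the main term, uniformly in $\mu$; what remains is $K_\nu(y)=\tfrac12e^{-yA}\int_{\mathcal{C}}e^{-y(u^3/3-\zeta u)}g(u)\,\wrt{u}$ over an Airy-type contour $\mathcal{C}$, where $g(u):=e^{-rs(u)}\,(\wrt{s}/\wrt{u})$.

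Finally, write $g(u)=a_0+b_0u+(u^2-\zeta)p(u)$, with $a_0=\tfrac12\bigl(g(\zeta^{1/2})+g(-\zeta^{1/2})\bigr)$ and $b_0=\tfrac1{2\zeta^{1/2}}\bigl(g(\zeta^{1/2})-g(-\zeta^{1/2})\bigr)$, integrate the $(u^2-\zeta)p(u)$ term by parts (noting that $(u^2-\zeta)e^{-y(u^3/3-\zeta u)}$ is $-y^{-1}$ times the $u$-derivative of $e^{-y(u^3/3-\zeta u)}$), which gains a factor $y^{-1}$ and, after the scaling $u=y^{-1/3}v$ in the residual integrals, produces the error terms $O\bigl(y^{-4/3}\Ai(y^{2/3}\zeta)\bigr)+O\bigl(y^{-5/3}\Ai'(y^{2/3}\zeta)\bigr)$. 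One then uses $\int_{\mathcal{C}}e^{-y(u^3/3-\zeta u)}\,\wrt{u}=c_0\,y^{-1/3}\Ai(y^{2/3}\zeta)$ and, by differentiating in $\zeta$, $\int_{\mathcal{C}}u\,e^{-y(u^3/3-\zeta u)}\,\wrt{u}=c_0\,y^{-2/3}\Ai'(y^{2/3}\zeta)$, where $c_0$ is an explicit constant ($c_0=-2\pi i$ for the natural orientation of $\mathcal{C}$). Since $e^{-rs_\pm}=e^{\mp r\mu}e^{ir\pi/2}$, and since differentiating $\phi(s(u))=\tfrac13u^3-\zeta u+A$ twice at the saddles gives $\phi''(s_\pm)(\wrt{s}/\wrt{u})^2=\pm2\zeta^{1/2}$, hence $(\wrt{s}/\wrt{u})\big|_{u=\pm\zeta^{1/2}}=(2i\zeta^{1/2}/\sinh\mu)^{1/2}$, which on fixing branches equals the single value $i\sqrt2\,\bigl(\zeta/(-\sinh^2\mu)\bigr)^{1/4}$, the symmetric combination $a_0$ carries the factor $\cosh(r\mu)$, the antisymmetric combination $b_0$ the factor $\sinh(r\mu)$, and both carry $\bigl(\zeta/(-\sinh^2\mu)\bigr)^{1/4}$; substituting into $K_\nu(y)=\tfrac12e^{-yA}\bigl(a_0c_0y^{-1/3}\Ai(y^{2/3}\zeta)+b_0c_0y^{-2/3}\Ai'(y^{2/3}\zeta)\bigr)+(\text{errors})$ reproduces the claimed expansion, with error constants depending only on $\mu_0$ and $M$.

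I expect the genuine difficulties to be twofold. The first is establishing the uniform analyticity and univalence of the cubic transformation right down to coalescence ($\mu=0$), together with the uniform exponential smallness of the tail of the deformed contour; this is the part of the Chester--Friedman--Ursell machinery that requires real work. The second is the careful bookkeeping of the several branch choices --- $\zeta^{1/2}$ and $\zeta^{3/2}$ for $\zeta\le0$, the square roots of $\phi''(s_\pm)$, and the rotation relating $\mathcal{C}$ to the standard Airy contour (equivalently, the value of $c_0$) --- so that every sign and fractional power in the final formula comes out correctly. As a consistency check, the substitution $\theta=\tfrac\pi2-i\mu$ (so that $\sin\theta=\cosh\mu$, $\cos^2\theta=-\sinh^2\mu$, $r\theta-r\tfrac\pi2=-ir\mu$, and the $\zeta$ of Theorem~\ref{thmFirstCaseMonoKBesselUnif} becomes the $\zeta$ above) turns the expansion of Theorem~\ref{thmFirstCaseMonoKBesselUnif} term by term into the one claimed here; this both confirms the formula and shows that Theorem~\ref{thmSecondCaseOscillKBesselUnif} also follows from the complex-parameter version of the argument proving Theorem~\ref{thmFirstCaseMonoKBesselUnif}.
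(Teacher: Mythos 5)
Your proposal is correct and follows essentially the same route as the paper: the Chester--Friedman--Ursell cubic transformation at the two coalescing saddles $\pm\mu+i\tfrac{\pi}{2}$ (you place them at $\pm\mu-i\tfrac{\pi}{2}$ via the other sign in the integral representation, which is equivalent), with the same $A$ and $\zeta$, the same branch considerations, and the same symmetric/antisymmetric splitting producing the $\cosh(r\mu)$ and $\sinh(r\mu)$ coefficients. The paper merely organizes the computation by substituting $\widetilde{\theta}=-i\mu$ so as to reuse the first-case formulas verbatim --- exactly the reduction you note as your final consistency check.
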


 \begin{rema}  We make a few observations.
 \begin{enumerate}
\item  
When $r=0$, our result agrees with the standard result by Balogh~\cite{Bal66}. To see this, let us use $\widetilde{\zeta}$ to denote $\zeta$ from~\cite{Bal66} to distinguish it from our use of $\zeta$.  For the first case, letting $\widetilde{\theta} = \theta - \pi/2$, we note that $\sec^{-1}(\sec \widetilde{\theta}) = - \widetilde{\theta}$ as $\widetilde{\theta} <0$, which yields via a short computation that $\zeta = \widetilde{\zeta} \cos^{2/3} \widetilde{\theta}$.  As $y \cos \widetilde{\theta} = t$, the agreement follows.  For the second case, we note that $\sec^{-1}(1/\cosh \mu) = \cos^{-1}(\cosh \mu) = i \mu$, which yields the nonpositive real number $\zeta = \widetilde{\zeta} \cosh^{2/3} \mu$ and agreement.

 \item The expressions are defined when $\theta \rightarrow \frac \pi 2$ and when $\mu \rightarrow 0$ by Taylor approximation.
 
 \item When $r \neq 0$, there are order $y^{-2/3}$ terms, unlike when $r=0$.

\end{enumerate}

\end{rema}

We also give a result for small $y$, which will be applied in the computation of our bounds for the Eisenstein series.

\begin{prop}\label{lemmUBndsOnK}
For $3/2 \geq r \geq 1/2$, $|t| \geq t_0$, and $0<y <1$, we have \[K_{r - 1/2 +i t}(y) = O( y^{1/2-r} e^{-|t|\pi/2} |t|^{r-1})\] where the implied constant depends only $t_0$ and is uniformly bounded for all large enough $t_0$.
\end{prop}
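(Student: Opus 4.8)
The plan is to bound $K_{\nu'}(y)$ with $\nu' = r - 1/2 + it$ and $0 < y < 1$ by exploiting the standard integral representation that makes the dependence on $y$ transparent, rather than by Laplace's method (which governs the regime $y \to \infty$). I would start from the Mellin--Barnes type formula, or more simply from the classical representation
\begin{align}
K_{\nu'}(y) = \frac{1}{2}\left(\frac{y}{2}\right)^{\nu'} \int_0^\infty e^{-u - y^2/(4u)} u^{-\nu' - 1} \, \wrt u,
\end{align}
valid for $\Re \nu'$ in a suitable range (and extended by the symmetry $K_{\nu'} = K_{-\nu'}$ when needed). Writing $\nu' = r - 1/2 + it$, the prefactor contributes $(y/2)^{r - 1/2}$ in modulus, which already produces the claimed $y^{1/2 - r}$ factor once we note $(y/2)^{r-1/2}$ and $y^{1/2-r}$ differ only by a constant (and a sign in the exponent — so I should be careful to use $K_{-\nu'}$, i.e. the representation with exponent $-\nu'$, so that the power of $y$ comes out as $y^{1/2 - r}$ as stated; alternatively use $y^{r-1/2} \le y^{1/2-r}$ for $0<y<1$ when $r \ge 1/2$, which is the quick route). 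The remaining integral, after taking absolute values and using $0 < y < 1$, is dominated by $\int_0^\infty e^{-u} u^{1/2 - r}\,\wrt u = \Gamma(3/2 - r)$ plus a piece where the $e^{-y^2/(4u)}$ factor matters near $u = 0$; a crude split at $u = y$ handles this and shows the integral is $O(1)$ uniformly. This leaves us needing the factor $e^{-|t|\pi/2}|t|^{r-1}$.

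The exponential and polynomial decay in $t$ do not come from that real integral — there the $u^{it}$ factor is just a unimodular oscillation. Instead I would extract it by a different, $t$-adapted representation. The cleanest is to use the known asymptotics of $K_{it'}(y)$ for $y$ fixed (or $y \to 0$) and $t' \to \infty$: from the uniform-in-$y$, small-argument behavior one has $|K_{\sigma + it}(y)| \ll_\sigma y^{-\sigma} |t|^{\sigma - 1/2} e^{-\pi|t|/2}$ for $0 < y \le 1$, $|t|$ large, and $\sigma$ in a bounded range — this is essentially the statement that the $K$-Bessel function of large imaginary order is exponentially small, with the $e^{-\pi|t|/2}$ coming from the $\Gamma$-factors (via $|\Gamma(\sigma + it)| \sim \sqrt{2\pi}\,|t|^{\sigma - 1/2} e^{-\pi|t|/2}$) in the connection formula $K_\nu(y) = \frac{\pi}{2}\frac{I_{-\nu}(y) - I_\nu(y)}{\sin \pi \nu}$ together with the power-series expansion of $I_{\pm\nu}(y)$. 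Setting $\sigma = r - 1/2$ gives $y^{1/2 - r}$ and $|t|^{r - 1}$, exactly the proposition; uniformity of the implied constant for $t_0$ large is automatic since $|\Gamma(\sigma + it)| e^{\pi|t|/2} |t|^{1/2 - \sigma} \to \sqrt{2\pi}$.

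Concretely, the steps I would carry out are: (i) write $K_{\nu'}(y) = \frac{\pi}{2 \sin(\pi \nu')}\bigl(I_{-\nu'}(y) - I_{\nu'}(y)\bigr)$ with $\nu' = r - 1/2 + it$, noting $\nu'$ is not an integer so this is valid; (ii) expand $I_{\pm \nu'}(y) = \sum_{k \ge 0} \frac{(y/2)^{\pm\nu' + 2k}}{k!\,\Gamma(\pm\nu' + 1 + k)}$ and observe that for $0 < y < 1$ the $k = 0$ terms dominate, the tails being $O(y^2)$ smaller; (iii) bound $1/|\sin(\pi \nu')| \ll e^{-\pi |t|}$ (since $|{\sin(\pi(r-1/2+it))}| \asymp e^{\pi|t|}$ for $|t|$ large), bound $|(y/2)^{\nu'}| = (y/2)^{r - 1/2}$ and $|(y/2)^{-\nu'}| = (y/2)^{1/2 - r}$, and bound $1/|\Gamma(\pm \nu' + 1)|$ using the standard asymptotic $|\Gamma(a + it)| \asymp_a |t|^{a - 1/2} e^{-\pi |t|/2}$; (iv) combine: the dominant contribution is the $I_{-\nu'}$ term, giving $\ll e^{-\pi|t|} \cdot (y/2)^{1/2 - r} \cdot |t|^{1/2 - r} e^{\pi|t|/2} = y^{1/2 - r} |t|^{1/2 - r} e^{-\pi|t|/2}$ — wait, this produces $|t|^{1/2-r}$, not $|t|^{r-1}$, so I must re-examine which exponent the proposition intends; most likely the intended statement pairs $K_{r - 1/2 + it}$ with exponents coming from $\sigma = r$, and I would reconcile this by using $|\Gamma(\nu' + 1 + k)|$ with the correct shift or by noting $r - 1 \ge 1/2 - r$ is false for $r < 3/4$, so in fact the sharp bound has $|t|^{1/2-r}$ and the proposition's $|t|^{r-1}$ is an upper bound only when $r \ge 3/4$ — I would instead trust the proposition and track the $\Gamma$-factor shift carefully, since the extra "$+1$" in $\Gamma(\pm\nu' + 1 + k)$ shifts $\sigma$ by one: with $\sigma = r + 1/2$ the asymptotic gives $|t|^{\sigma - 1/2} = |t|^{r}$ in the denominator, i.e. $|t|^{-r}$, times the $y^{1/2-r}$, times $e^{-\pi|t|/2}$, and reconciling the half-integer bookkeeping is exactly the place to be meticulous.

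The main obstacle, then, is not any deep idea but the precise bookkeeping of the power of $|t|$ through the $\Gamma$-factor asymptotics and the $1/\sin(\pi\nu')$ factor, making sure the $y$-power and $|t|$-power come out with the exponents as stated and that the implied constant is genuinely uniform for all large $t_0$; the uniformity follows once one checks that all the $\asymp$ relations used ($|\Gamma(a+it)|e^{\pi|t|/2}|t|^{1/2-a} \to \sqrt{2\pi}$ and $|\sin\pi\nu'|e^{-\pi|t|} \to \tfrac12$) hold with constants approaching fixed limits, hence are bounded above and below uniformly for $|t| \ge t_0$ once $t_0$ is large, and that the series tails contribute a factor $1 + O(y^2) = O(1)$.
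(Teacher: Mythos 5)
Your proposed route is, in substance, the same as the paper's: the connection formula (\ref{eqnKInTermsOfI}), the power series for $I_{\pm\nu'}$, the lower bound $|\sin(\pi\nu')|\gg e^{\pi|t|}$, and Stirling for the gamma factors, with the $k\ge1$ tails absorbed by $\sum_m (y/2)^{2m}=O(1)$ for $0<y<1$. The opening detour through $\tfrac12(y/2)^{\nu'}\int_0^\infty e^{-u-y^2/(4u)}u^{-\nu'-1}\,\wrt u$ is unnecessary, as you yourself observe, since that representation cannot see the exponential decay in $t$.

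The one genuine defect is that the single computation you carry to completion in step (iv) lands on the wrong power of $|t|$, and you leave the reconciliation unfinished --- indeed you float the possibility that the stated exponent is correct only for $r\ge 3/4$. That is not so; the slip is in $1/|\Gamma(1-\nu')|$. With $\nu'=r-\tfrac12+it$, the $k=0$ term of $I_{-\nu'}$ carries $\Gamma(1-\nu')=\Gamma(\tfrac32-r-it)$, and $|\Gamma(a+it)|\sim\sqrt{2\pi}\,|t|^{a-1/2}e^{-\pi|t|/2}$ with $a=\tfrac32-r$ gives $|\Gamma(1-\nu')|\asymp |t|^{1-r}e^{-\pi|t|/2}$, hence $1/|\Gamma(1-\nu')|\asymp |t|^{r-1}e^{\pi|t|/2}$, not $|t|^{1/2-r}e^{\pi|t|/2}$. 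So the $I_{-\nu'}$ term contributes $\ll e^{-\pi|t|}\cdot(y/2)^{1/2-r}\cdot|t|^{r-1}e^{\pi|t|/2}=O\bigl(y^{1/2-r}|t|^{r-1}e^{-\pi|t|/2}\bigr)$, which is exactly the claim. The $I_{\nu'}$ term, as you correctly work out at the end, carries $\Gamma(1+\nu')=\Gamma(r+\tfrac12+it)$ and contributes $O\bigl((y/2)^{r-1/2}|t|^{-r}e^{-\pi|t|/2}\bigr)$; since $0<y<1$ and $\tfrac12\le r\le\tfrac32$ we have $(y/2)^{r-1/2}\le(y/2)^{1/2-r}$ and $|t|^{-r}\le|t|^{r-1}$, so it is absorbed. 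With this correction the argument is complete and agrees with the paper's proof for the full range $\tfrac12\le r\le\tfrac32$ (the paper bounds $|m!\,\Gamma(\pm\nu'+m+1)|$ from below uniformly in $m$ rather than isolating $k=0$, but that difference is cosmetic); your uniformity remarks for $|t|\ge t_0$ with $t_0$ large are fine as stated.
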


\begin{rema}
Here, $t_0\geq1$ is chosen to be a fixed large constant (large enough to use the first term in the Stirling asymptotic series for the gamma function for the approximation in the proof of the proposition below).

\end{rema}

 Let $z:=x+iy, s:=r+it \in \CC$.  As an application of our above results, we compute bounds on the Eisenstein series for large enough $|t|$. Let $G:= \PSL_2(\R)$, $\Ga \subset G$ be a cofinite Fuchsian group, and $\h$ be the upper-half plane model of the hyperbolic plane (i.e. with the Poincare metric).  The group $G$ acts transitively on the left of $\h$ via M\"obius transformations and, moreover, these actions are orientation-preserving isometries.  We assume that $\Gamma \backslash \h$ has at least one cusp, that one of these cusps is located at $\infty$, and that the cusp $\kappa_1:=\infty$ (called the {\em standard cusp}) has stabilizer \[\Ga_1:=\Ga_\infty := \left\{ \left( \begin{array}{cc}
1 & b \\
0 & 1 \end{array} \right) \bigg{\vert} \ b \in \Z \right\}\] in $\Ga$.  As $\Ga_\infty$ acts on the unit strip $[0,1] \times (0, \infty)$ to tesselate $\h$, the quotient group $\Ga_\infty \backslash \Ga$ tessellates the unit strip so as to agree with the tessellation of $\h$ given by $\Ga$ and we have a canonical\footnote{See~\cite[Chapter~6, Page~5]{He} for the definition of \textit{canonical}.} fundamental domain $F$ that extends to infinity for the $\Ga_\infty \backslash \Ga$ action---to determine $F$, let the real part of the points of $F$ range between $0$ and $1$, inclusive of $0$.  Often we will consider the topological closure $\overline{F}$.

There are, in general, a finite number of inequivalent cusps $\{\kappa_j\}_{j=1}^q \subset \RR \cup \{\infty\}$, and the stabilizer in $\Gamma$ of a cusp $\kappa_j$ is a parabolic subgroup $\Gamma_j$ (see, for example,~\cite[Chapter~6]{He} for the definition of inequivalent cusps).  For each inequivalent cusp, we choose $\sigma_j \in G$ such that $\sigma_j(\kappa_j) =\infty$, namely taking the cusp $\kappa_j$ into the standard cusp.  (We always choose $\sigma_1$ to be the identity.)  Note that $\sigma_j$ is not in $\Ga$ for any $j \in \{2, \cdots, q\}$.  By modifying $\sigma_j$ for $j \in \{2, \cdots, q\}$, we can ensure that \begin{align}\label{eqn:FullCuspInY}
 \sigma_j(\overline{F}) \cap \{z \in \h : y \geq B\} = [0,1] \times [B, \infty) \end{align} holds for all $j \in \{1, \cdots, q\}$ and for all $B \geq B_0>1$ (see~\cite[(2.2)]{St} or~\cite[Page 268]{He}).  Here $B_0$ is a fixed constant depending only on $\Ga$.  Let us denote the $j$-th cuspidal region in $\overline{F}$ by $\Ce_{j,B}$:  \[\Ce_{j,B} := \sigma_j^{-1}\left( [0,1] \times [B, \infty)\right) \subset \overline{F}.\]  And define the bounded region of $\overline{F}$ by \[F_B:=\overline{F} - \bigcup_{j=1}^q \Ce_{j,B}.\]


There is an Eisenstein series $E^{(j)}(z, s)$ of weight $0$ for each inequivalent cusp~\cite[Definition~3.5, page~280]{He}:  \[E^{(j)}(z,s) :=E_0^{(j)}(z, s):=\sum_{\sigma \in \Ga_j \backslash \Ga}  (\im(\sigma_j  \sigma z))^s \quad \quad E(z,s):= E^{(1)}(z,s):= E_0^{(1)}(z,s).\]  The Fourier expansion at the standard cusp is the following (see ~\cite[Lemma~2.6]{MS3} or~\cite[Page~280]{He} for example):  \begin{align}\label{eqnFourExpInYEisen2}
E^{(j)}_0(z,r +i t) = \delta_{j1}y^{r+it} &+\varphi_{j1}(r +it) y^{1-r - it}   \\\nonumber & + \sum_{n\neq0} \psi_{n,j} (r+it) \sqrt{y} K_{r-1/2+it}(2 \pi|n|y)e^{2 \pi i n x} \end{align} where $\varphi_{j1}(r +it)$ is an element in the scattering matrix $\Phi(r+it) = (\varphi_{jk}(r+it))$ (cf.~\cite[Chapter 8]{He}) and $\psi_{n,j} (r+it)$ are the Fourier coefficients.  Since $E^{(j)}_0(z,r+it)$ has no poles for $|t|\geq1$ (see~\cite{Ku} and~\cite{St}), let $c_n := \psi_{n,j} (r+it)$.  


We first give a bound on the Fourier coefficients of the Eisenstein series, the proof of which only requires our nonuniform bounds on the $K$-Bessel function (Theorem~\ref{thmSecondCaseOscillKBessel} in particular).

\begin{theo}\label{lemmBndSumcnsquares}
Let $t_0\geq B_0$ be a large constant.  For $N \geq 1$, $3/2\geq r > 1/2$, and $|t| \geq t_0$, we have \[\sum_{1 \leq |n|\leq N}|c_n|^2 = O\left(e^{|t|\pi} (N+|t|)\right)\left\{\omega(t) + \left(|t| + \frac{N}{|t|}\right)^{2r-1}  \right\},\] where the implied constant depends only on the lattice subgroup $\Ga$ and $t_0$.
\end{theo}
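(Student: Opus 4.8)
The plan is to pass from $E_0^{(j)}$ to its Fourier coefficients by Parseval along horocycles, average in the height $y$ over a short window adapted to $N$ and $|t|$, bound the participating $K$-Bessel functions from below using Theorem~\ref{thmSecondCaseOscillKBessel}, and control the resulting mean square of $E_0^{(j)}$ by automorphy; we may assume $t>0$, the case $t<0$ following as in Remark~\ref{rmkNegativeTIsOK}. For fixed $y>0$, the expansion~\eqref{eqnFourExpInYEisen2} together with orthogonality of the additive characters on $[0,1]$ gives
\begin{equation*}
\int_0^1\bigl|E_0^{(j)}(x+iy,r+it)\bigr|^2\,dx \;=\; |A_j(y)|^2 \;+\; y\sum_{n\neq0}|c_n|^2\,\bigl|K_{r-1/2+it}(2\pi|n|y)\bigr|^2,
\end{equation*}
with constant term $A_j(y):=\delta_{j1}y^{r+it}+\varphi_{j1}(r+it)y^{1-r-it}$. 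Discarding $|A_j(y)|^2\ge0$, keeping only $1\le|n|\le N$, and integrating over $y\in[Y,2Y]$ against $dy/y^2$ for a suitable $Y\asymp\min\{1,|t|/N\}$ (implied constant small enough that $2\pi|n|y\le|t|/2$ throughout), all arguments $2\pi|n|y$ stay safely to the left of the coalescing saddle point at argument $|t|$ — which is exactly why the \emph{nonuniform} Theorem~\ref{thmSecondCaseOscillKBessel}, and not an Airy-type uniform estimate, is what is needed here.

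To bound the Bessel integrals from below one splits $1\le|n|\le N$ into the range $2\pi|n|Y\lesssim\sqrt t$ — where the classical large-order asymptotic $K_{r-1/2+it}(a)=\tfrac12\Gamma(r-1/2+it)(a/2)^{-(r-1/2+it)}\bigl(1+O(a^2/|t|)\bigr)$ and Proposition~\ref{lemmUBndsOnK} apply, and Stirling gives $|K_{r-1/2+it}(a)|^2\asymp t^{2r-2}a^{1-2r}e^{-\pi t}=(t/a)^{2r-1}t^{-1}e^{-\pi t}\gg e^{-\pi t}/t$ since $r\ge\tfrac12$ and $a\le t/2$ — and the complementary range $\sqrt t\lesssim 2\pi|n|Y\le t/2$, where Theorem~\ref{thmSecondCaseOscillKBessel} holds with slowly-varying envelope and its oscillatory factor has phase of $y$-derivative $-2\pi|n|\sinh\mu$, $\mu:=\cosh^{-1}(|t|/(2\pi|n|y))$; the phase therefore sweeps $\asymp t$ radians over $[Y,2Y]$, so averaging leaves the mean of the squared bracket equal to $\tfrac12\cosh\bigl(2(r-\tfrac12)\mu\bigr)\ge\tfrac12$, and since $2\pi|n|y\cdot\sinh\mu=\sqrt{t^2-(2\pi|n|y)^2}\le t$ one again gets $\int_Y^{2Y}\tfrac1y|K_{r-1/2+it}(2\pi|n|y)|^2\,dy\gg e^{-\pi t}/t$. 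These lower bounds are uniform in $1\le|n|\le N$ and $r\in[\tfrac12,\tfrac32]$, whence
\begin{equation*}
\sum_{1\le|n|\le N}|c_n|^2 \;\ll\; t\,e^{\pi t}\int_Y^{2Y}\!\!\int_0^1\bigl|E_0^{(j)}(x+iy,r+it)\bigr|^2\,\frac{dx\,dy}{y^2}.
\end{equation*}

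It remains to estimate the mean square of $E_0^{(j)}$ over $R:=[0,1]\times[Y,2Y]$. By $\Ga$-invariance of $|E_0^{(j)}|^2$ and of the hyperbolic measure, $\int_R|E_0^{(j)}|^2\,dx\,dy/y^2=\int_{\overline F}|E_0^{(j)}|^2\,N_R\,dx\,dy/y^2$ where $N_R(w):=\#\{\gamma\in\Ga:\gamma w\in R\}\ll 1/Y$ (a uniform lattice-point count, $R$ having hyperbolic diameter $\asymp\log(1/Y)$) and $N_R$ vanishes on points of $\overline F$ of height $\gg 1/Y$. The bounded core $F_{B_0}$ thus contributes $\ll Y^{-1}\sup_{F_{B_0}}|E_0^{(j)}(\cdot,r+it)|^2\ll Y^{-1}\omega(t)$, with $\omega(t)$ the quantity that by its definition bounds $E_0^{(j)}$ (equivalently, its constant-term/scattering data) on the bounded part; on the cuspidal collars $y$ runs only over $[B_0,\ll 1/Y]$, and Parseval at each height, together with Theorem~\ref{thmFirstCaseMonoKBessel} for the exponentially small non-oscillatory Bessel terms, gives a contribution $\ll(1/Y)^{2r-1}+|\varphi_{j1}(r+it)|^2$ plus a remainder $\ll t^{-1}e^{-\pi t}\log(tN)\sum_{|n|\ll t}|c_n|^2$. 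Multiplying by $t\,e^{\pi t}$, inserting $Y\asymp t/N$, and absorbing the last (self-referential) term — it involves only $|n|\ll t$, hence is controlled by the case $N\asymp t$, which is handled identically but with $Y$ a fixed height, where the collar reach is bounded and $\omega(t)$ dominates — one arrives, after routine bookkeeping of the collar contributions, at
\begin{equation*}
\sum_{1\le|n|\le N}|c_n|^2\;\ll\;e^{t\pi}(N+t)\Bigl\{\omega(t)+\bigl(t+N/t\bigr)^{2r-1}\Bigr\},
\end{equation*}
which is the claim.

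The delicate step is the mean square of $E_0^{(j)}$ over $R$: Parseval would re-express it through the very coefficients being bounded, so one must use automorphy to fold $R$ onto $\overline F$, pin down exactly how far its $\Ga$-translates penetrate the cusps (height $\ll N/t$), and cleanly separate the genuinely bounded $\omega(t)$ from the polynomially growing constant-term piece $(t+N/t)^{2r-1}$ and the smaller-range, absorbable self-referential Bessel piece. The Bessel lower bound, by contrast, is routine once the argument is confined to $[0,t/2]$, where Theorem~\ref{thmSecondCaseOscillKBessel} holds with a well-behaved envelope and a plain $y$-average removes the oscillation — and this confinement, available because only one height window per coefficient is needed, is precisely what lets the proof sidestep the uniform (Airy) regime.
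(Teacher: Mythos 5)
Your overall architecture (Parseval in $x$, a $y$-average adapted to $N$ and $|t|$, a lower bound on the $K$-Bessel integrals from Theorem~\ref{thmSecondCaseOscillKBessel}, and automorphy to fold the horocycle strip into $\overline F$) is the paper's architecture, and your Bessel lower bounds are essentially sound --- though the paper makes that step much lighter by integrating over the long window $Y=|t|/(8\pi N)$, $H=|t|/(4\pi)$, so that every interval $[2\pi|n|Y,2\pi|n|H]$ with $1\le|n|\le N$ contains the single fixed interval $[|t|/4,|t|/2]$ and only one lower bound $C=\int_{|t|/4}^{|t|/2}|K_{r-1/2+it}(y)|^2\,\wrt y/y\gg |t|^{-1}e^{-|t|\pi}$ is ever needed, with no splitting at $\sqrt{|t|}$ and no oscillation-averaging over the zeros of the sine.

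The genuine gap is in your upper bound for the mean square of $E_0^{(j)}$ over the folded domain, in two places. First, you bound the compact-core contribution by $Y^{-1}\sup_{F_{B_0}}|E_0^{(j)}|^2$ and assert $\sup_{F_{B_0}}|E_0^{(j)}|^2\ll\omega(t)$ ``by the definition'' of $\omega$. That is not what $\omega$ is: it is a majorant for the scattering-matrix quantities appearing in the Maass--Selberg relation (with $\int_{-T}^T\omega\ll T^2$), not a pointwise bound on the Eisenstein series --- indeed a sup-norm bound of this strength on the compact core is essentially the content of Theorem~\ref{propEisenBiggerHalf}, which is proved \emph{from} the present theorem, so invoking it here is circular. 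Second, your cuspidal-collar contribution carries the ``self-referential'' term $\ll t^{-1}e^{-\pi t}\log(tN)\sum_{|n|\ll t}|c_n|^2$, and after multiplying by $te^{\pi t}$ this becomes $\log(tN)\sum_{|n|\ll t}|c_n|^2$, which is not absorbable into the left-hand side (the multiplicative $\log$ alone kills any iteration), and your proposed reduction to the case $N\asymp t$ reproduces the same term over the same range of $n$, hence does not close. The paper avoids both problems at once: it subtracts the constant term in each cuspidal region (the truncation $E^{(j)}_{0,B}$) and applies the Maass--Selberg relation, which evaluates $\sum_j\int_{F_B}|E^{(j)}_{0,B}|^2\,\wrt\mu$ exactly in terms of $B^{2r-1}$, $B^{1-2r}\sum|\varphi_{jj'}|^2$ and $\Re(\overline{\varphi_{jj}}B^{2it}/(it))$, the latter two controlled by $\omega(t)$ via Hejhal's (3.38); this yields $J\le O(1+Y^{-1})(B^{2r-1}+\omega(t))$ with $B=\max(B_0,H,Y^{-1})$ and no self-referential term at all. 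To repair your argument you should replace both the sup-norm step and the collar Parseval by exactly this $L^2$ truncation-plus-Maass--Selberg input.
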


\begin{rema}
Note that~\cite[Proposition~4.1]{St} gives a bound for the case of $r=\frac 1 2$.  Here, $\omega(t)$ denotes the spectral majorant function whose properties are $\omega(-R) = \omega(R) \geq 1$ and \begin{align}
\int_{-T}^{T} \omega(R)~\wrt R= O(T^2) 
\end{align} as $|T| \rightarrow \infty$~\cite[Pages~161, 299, 315]{He}.  The implied constant depends only on the lattice subgroup $\Ga$. 

\end{rema}  

Finally, we give a bound on the Eisenstein series themselves, the proof of which requires our bounds on the Fourier coefficients and on the $K$-Bessel function.  Note that our uniform bound for the $K$-Bessel function is essential here.

\begin{theo}\label{propEisenBiggerHalf}
Let $j \in \{1, \cdots, q\}$, $t_0\geq B_0$ be a large constant, $|t| \geq t_0$, $\frac 3 2 \geq r \geq \frac 1 2$, $y >0$, and $\varepsilon >0$.  Then, we have \begin{align*}
&E^{(j)}_0(z, r + it) = \\ &\begin{cases} 
\delta_{j1}y^{1/2+it} + O(y^{1/2})+O\left(y^{-1/2 -\varepsilon}\sqrt{\omega(t)} |t|^{1+\varepsilon} \right) & \textrm{ if } r = \frac 1 2 \textrm { and } 0<y<1 \\
 \delta_{j1}y^{r+it} + O(y^{1-r})+O(y^{1-r})\left(\left(\frac{|t|} y\right)^{r+1/2}  
+ \frac{|t|} y \sqrt{\omega(t)} \right) & \textrm{ if } 1 \geq r > \frac 1 2 \textrm { and } 0<y<1 \\  
\delta_{j1}y^{r+it} + O(y^{1-r})+O\left(\left(\frac{|t|} y\right)^{2r-1/2}  
+ \left(\frac{|t|} y\right)^r\sqrt{\omega(t)} \right) & \textrm{ if } \frac 3 2 \geq r > 1 \textrm { and } 0<y<1 
\\ \delta_{j1}y^{1/2+it} + O(y^{1/2}) + O\left(
 |t|^{1+\varepsilon}\sqrt{\omega(t)} \right) & \textrm{ if }  r = \frac 1 2 \textrm { and } 1\leq y \leq \frac{|t|} 2 
\\ \delta_{j1}y^{r+it} + O(y^{1-r}) + O\left(|t|^{r+1/2}  
+ |t| \sqrt{\omega(t)} \right) & \textrm{ if } 1 \geq r > \frac 1 2 \textrm { and } 1\leq y \leq \frac{|t|} 2  
\\  \delta_{j1}y^{r+it} ++ O(y^{1-r})+O\left(|t|^{2r-1/2}  
+ |t|^r \sqrt{\omega(t)} \right) & \textrm{ if } \frac 3 2 \geq r > 1 \textrm { and } 1\leq y \leq \frac{|t|} 2 
\\ \delta_{j1}y^{1/2+it} + O(y^{1/2})+O\left(e^{|t|\frac \pi 2-2 \pi y}\right) \left(|t|^{-1/2+\varepsilon}\sqrt{\omega(t)} \right) & \textrm{ if }  r = \frac 1 2 \textrm { and }  \frac{|t|} 2 < y
\\  \delta_{j1}y^{r+it} + O(y^{1-r})+O\left(e^{|t|\frac \pi 2-2 \pi y}\right) \left(\sqrt{|t|}+ \frac{\sqrt{\omega(t)}}{\sqrt{|t|}}\right) & \textrm{ if } \frac 3 2 \geq r > \frac 1 2 \textrm { and }  \frac{|t|} 2 < y\end{cases}  \end{align*} where the implied constants depend only on the lattice subgroup $\Ga$ and $t_0$.  
\end{theo}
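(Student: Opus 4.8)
The strategy is to split the Fourier expansion~\eqref{eqnFourExpInYEisen2} into its three constituent pieces---the constant term $\delta_{j1}y^{r+it}$, the scattering term $\varphi_{j1}(r+it)y^{1-r-it}$, and the sum of $K$-Bessel terms---and bound each separately, with the region of $y$ dictating which estimate for $K_{r-1/2+it}$ is the relevant one. The constant term is trivially of size $y^r$, but since we are stating the asymptotic we keep it as the main term $\delta_{j1}y^{r+it}$. For the scattering term, I would use the standard bound $|\varphi_{j1}(r+it)| = O(1)$ for $\re(r) > 1/2$ bounded away from $1$, and for $r=1/2$ the known bound $|\varphi_{j1}(1/2+it)| = O(|t|^\varepsilon)$ or $O(\log|t|)$ (cf.~\cite{Ku},~\cite{St}); this contributes the $O(y^{1-r})$ term (and in the $r=1/2$ case gets absorbed into the other error terms). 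The genuinely substantive work is the Bessel sum.

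For the Bessel sum $\sum_{n\neq 0} c_n \sqrt{y}\, K_{r-1/2+it}(2\pi|n|y) e^{2\pi i n x}$, I would bound it by $\sqrt{y}\sum_{n\geq 1}|c_n|\,|K_{r-1/2+it}(2\pi ny)|$ and apply Cauchy--Schwarz against $\sum|c_n|^2$ (controlled by Theorem~\ref{lemmBndSumcnsquares}) combined with dyadic decomposition in $n$. The key point is that $K_{r-1/2+it}(2\pi ny)$ decays rapidly once $2\pi ny$ exceeds the order $|t|$ (the transition range). So one splits the $n$-sum at $n \approx |t|/y$: for $2\pi n y < |t|$ we are in the oscillatory regime $0 < 2\pi ny < |t|$ governed by Theorem~\ref{thmSecondCaseOscillKBessel} (with $\mu = \cosh^{-1}(|t|/(2\pi ny))$), or in the transition window $2\pi ny \approx |t|$ governed by the uniform result Theorem~\ref{thmSecondCaseOscillKBesselUnif}, while for $2\pi ny$ much larger than $|t|$ we are in the monotone regime of Theorem~\ref{thmFirstCaseMonoKBessel} where the exponential decay $e^{-2\pi ny(\cos\theta + \theta\sin\theta)}$ kills the tail. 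The three $y$-ranges in the statement ($0<y<1$, $1\leq y\leq |t|/2$, $|t|/2<y$) correspond exactly to whether the whole relevant $n$-sum sits in the oscillatory regime, straddles the transition, or sits entirely past the transition. For $0<y<1$ one additionally uses Proposition~\ref{lemmUBndsOnK} to handle the contribution of small argument $2\pi ny < 1$ cleanly. In each case one inserts the pointwise size $|K_{r-1/2+it}(2\pi ny)| \ll (y\sinh\mu)^{-1/2}e^{-\pi|t|/2}$ (oscillatory), respectively the Airy-function bound $\ll y^{-1/3}e^{-\pi|t|/2}$ uniformly across the transition (using $\Ai$ bounded and $\Ai(x)\ll e^{-cx^{3/2}}$ for $x>0$), sums the resulting geometric/power series in the dyadic blocks, applies Cauchy--Schwarz with Theorem~\ref{lemmBndSumcnsquares}, and reads off the stated powers of $|t|$, $y$, and the factor $\sqrt{\omega(t)}$; the $e^{-\pi|t|/2}$ from $K$ cancels against the $e^{\pi|t|/2}$ from $\sqrt{\sum|c_n|^2}$, leaving (in the $y>|t|/2$ case) the residual $e^{|t|\pi/2 - 2\pi y}$ from the genuinely exponentially small Bessel tail.

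The main obstacle---and the reason the uniform estimates are "essential" as the introduction stresses---is handling the transition range $2\pi ny \approx |t|$ without the nonuniform bounds blowing up: there one must use Theorems~\ref{thmFirstCaseMonoKBesselUnif} and~\ref{thmSecondCaseOscillKBesselUnif}, bound the Airy factors $\Ai(y^{2/3}\zeta)$ and $\Ai'(y^{2/3}\zeta)$ uniformly (they are $O(1)$ for $\zeta$ in a bounded neighborhood of $0$, with rapid decay for $\zeta>0$ and mere oscillation of size $O(|y^{2/3}\zeta|^{-1/4})$ for $\zeta<0$), and verify that the sum over the $O(\sqrt{y})$-or-so values of $n$ in this window contributes no worse than the claimed error. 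A secondary technical point is bookkeeping the dependence on $\varepsilon$ in the $r=1/2$ cases, which enters only through the scattering-matrix bound and the Fourier-coefficient bound, and checking that the exponent $r+1/2$ versus $2r-1/2$ in the $r\leq 1$ versus $r>1$ cases emerges correctly from which term (the $n\approx|t|/y$ boundary term versus the accumulated power sum) dominates---this is exactly where the case division on $r$ at $r=1$ originates.
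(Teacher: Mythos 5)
Your proposal is correct and follows essentially the same route as the paper: term-by-term estimation of the Fourier expansion~(\ref{eqnFourExpInYEisen2}), with the scattering term controlled by the uniform bound on $\varphi_{jk}$, the $n$-sum split at $n\approx |t|/y$ according to which $K$-Bessel regime (oscillatory, uniform/transition via Theorems~\ref{thmFirstCaseMonoKBesselUnif}--\ref{thmSecondCaseOscillKBesselUnif}, or exponentially decaying) applies, and the coefficients controlled by Theorem~\ref{lemmBndSumcnsquares} plus Cauchy--Schwarz. The only cosmetic difference is that the paper organizes the summation over $n$ by partial summation (a Riemann--Stieltjes integral against $S(X)=\sum_{1\le|n|\le X}|c_n|$ with a monotone majorant $f$) rather than by dyadic blocks, which yields the same powers of $|t|$ and $y$.
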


\begin{rema}  For $ \frac{|t|} 2 < y$, we have an alternative formulation of the theorem: \begin{align*}
&E^{(j)}_0(z, r + it) = \\ &\begin{cases}  \delta_{j1}y^{1/2+it} + O(y^{1/2})+O\left(e^{|t|\frac \pi 2-2 \pi y}\right) \left(y^{-1}|t|^{1/2+\varepsilon}\sqrt{\omega(t)} \right) & \textrm{ if }  r = \frac 1 2 \textrm { and }  \frac{|t|} 2 < y \\ 
\delta_{j1}y^{r+it} + O(y^{1-r})+O\left(e^{|t|\frac \pi 2-2 \pi y}\right) \left(y^{-1} \left(|t|^{3/2}+\sqrt{|t|\omega(t)}\right)\right) & \textrm{ if } \frac 3 2 \geq r > \frac 1 2 \textrm { and }  \frac{|t|} 2 < y\end{cases}  \end{align*}  
\end{rema}


These bounds on the Eisenstein series give the following corollary:

\begin{coro}
 Let $j \in \{1, \cdots, q\}$, $t_0\geq B_0$ be a large constant, $|t| \geq t_0$ and $\frac 3 2 \geq r \geq \frac 1 2$.  Then, as $y \rightarrow \infty$, $E^{(j)}_0(z, r + it)$ decays exponentially (like $y^{-1}e^{-2 \pi y}$) to the constant term of its Fourier expansion at a cusp.
\end{coro}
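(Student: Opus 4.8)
The plan is to read off the corollary from the Fourier expansion~\eqref{eqnFourExpInYEisen2} together with the estimates already in hand, rather than from Theorem~\ref{propEisenBiggerHalf} as a black box. First I would fix $r\in[\tfrac12,\tfrac32]$ and $t$ with $|t|\ge t_0$, write $c_n:=\psi_{n,j}(r+it)$, and make explicit what is being asserted: the ``constant term of the Fourier expansion at a cusp'' is $\delta_{j1}y^{r+it}+\varphi_{j1}(r+it)y^{1-r-it}$, and ``decays exponentially to it'' means that
\[
E^{(j)}_0(z,r+it)-\delta_{j1}y^{r+it}-\varphi_{j1}(r+it)y^{1-r-it}=\sum_{n\neq0}c_n\,\sqrt{y}\,K_{r-1/2+it}(2\pi|n|y)\,e^{2\pi inx}
\]
tends to $0$, uniformly in $x$, at the stated rate as $y\to\infty$. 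Since large $y$ puts us in the regime $\tfrac{|t|}{2}<y$, the claim is in principle contained in the last two cases of Theorem~\ref{propEisenBiggerHalf} (there the main terms $\delta_{j1}y^{r+it}+O(y^{1-r})$ record the constant term, as $|\varphi_{j1}(r+it)y^{1-r-it}|=O(y^{1-r})$, and the remaining error is $O\!\big(e^{|t|\pi/2-2\pi y}(\cdots)\big)$, exponentially small for fixed $t$); but bounding the tail sum directly is more transparent and shows clearly that the genuine remainder — as opposed to the $O(y^{1-r})$ placeholder — is exponentially small.

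Next I would estimate $\big|\sum_{n\neq0}c_n\sqrt{y}K_{r-1/2+it}(2\pi|n|y)e^{2\pi inx}\big|\le\sqrt{y}\sum_{n\neq0}|c_n|\,|K_{r-1/2+it}(2\pi|n|y)|$ in two steps. For the Bessel factor I would use the classical large-argument asymptotic $K_\nu(u)\sim\sqrt{\pi/(2u)}\,e^{-u}$, made uniform in the order by the elementary inequality $|K_{a+ib}(u)|\le K_{|a|}(u)\le K_1(u)$ for $|a|\le1$ (from $K_\nu(u)=\int_0^\infty e^{-u\cosh\tau}\cosh(\nu\tau)\,\wrt\tau$ and $|\cosh((a+ib)\tau)|\le\cosh(a\tau)$, plus monotonicity of $K_\nu$ in $\nu\ge0$), giving $|K_{r-1/2+it}(u)|=O(u^{-1/2}e^{-u})$ for $u\ge1$ with an absolute constant; hence, once $y$ is large, $\sqrt{y}\,|K_{r-1/2+it}(2\pi|n|y)|=O(|n|^{-1/2}e^{-2\pi|n|y})$, the power of $y$ cancelling. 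For the coefficients I would invoke Theorem~\ref{lemmBndSumcnsquares}, which for fixed $t$ yields $\sum_{1\le|n|\le N}|c_n|^2=O(N^{2r})$ and hence $\sum_{1\le|n|\le N}|c_n|=O(N^{r+1/2})$ with $r+\tfrac12\le2$ by Cauchy--Schwarz. Partial summation then shows $\sum_{n\neq0}|n|^{-1/2}|c_n|e^{-2\pi|n|y}$ is dominated by its $n=\pm1$ terms, namely $O\!\big((|c_1|+|c_{-1}|)e^{-2\pi y}\big)=O(e^{-2\pi y})$ (the coefficients being $O(1)$ for fixed $t$ by Theorem~\ref{lemmBndSumcnsquares} with $N=1$), the $|n|\ge2$ tail contributing $O(e^{-4\pi y})$; combining gives the exponential decay, and the sharper $y^{-1}e^{-2\pi y}$ rate is the one recorded in (the alternative formulation of) Theorem~\ref{propEisenBiggerHalf} for $\tfrac{|t|}{2}<y$.

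I do not expect a substantial obstacle: the analytic work is already done, in Theorem~\ref{propEisenBiggerHalf} or, on the self-contained route, in Theorem~\ref{lemmBndSumcnsquares} and the classical $K$-Bessel asymptotic. The one point requiring care is the bookkeeping of what counts as the constant term: in Theorem~\ref{propEisenBiggerHalf} the summand $\varphi_{j1}(r+it)y^{1-r-it}$ is absorbed into $O(y^{1-r})$, which does not itself tend to zero, so I must peel it off explicitly and verify that only the $n\neq0$ Fourier modes remain to be shown negligible. A secondary, purely technical point is making the bound on $K_{r-1/2+it}(2\pi|n|y)$ uniform in $n$, which the monotonicity inequality above dispatches.
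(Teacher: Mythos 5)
Your argument is correct in substance but follows a genuinely different route from the paper's, whose proof of the corollary is a one-liner: it reads the statement directly off Theorem~\ref{propEisenBiggerHalf} in the regime $\frac{|t|}{2}<y$ (where the $O(y^{1-r})$ is exactly the second piece $\varphi_{j1}(r+it)y^{1-r-it}$ of the constant term) and, for the expansion at a non-standard cusp, appeals to the analogue of that theorem. You instead re-derive the relevant case from scratch: you peel off the constant term explicitly and bound the remaining modes via the elementary inequality $|K_{a+ib}(u)|\le K_{|a|}(u)\le K_1(u)=O(u^{-1/2}e^{-u})$ together with the polynomial growth of $S(N)=\sum_{1\le|n|\le N}|c_n|$ extracted from Theorem~\ref{lemmBndSumcnsquares}. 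For fixed $t$ this is more elementary (none of the uniform saddle-point machinery is needed) and it makes the bookkeeping point --- that $O(y^{1-r})$ belongs to the constant term rather than to the error --- completely transparent. Three caveats. (i) At the endpoint $r=\frac{1}{2}$, Theorem~\ref{lemmBndSumcnsquares} is stated only for $r>\frac{1}{2}$; there you must substitute the bound of Str\"ombergsson (\cite[Proposition~4.1]{St}), as the paper does in proving Theorem~\ref{propEisenBiggerHalf}, which still gives polynomial growth of $S(N)$ and changes nothing downstream. (ii) ``At a cusp'' is meant to include the non-standard cusps; your argument transfers verbatim to the Fourier expansion of $E^{(j)}_0$ at any $\kappa_k$, but you should say so, as the paper's proof explicitly does. (iii) Your computation shows the tail is dominated by the $n=\pm1$ modes, each of size $|c_{\pm1}|\,e^{-2\pi y}$ with no extra negative power of $y$, so your self-contained route yields the rate $e^{-2\pi y}$; the parenthetical $y^{-1}e^{-2\pi y}$ is inherited only from the alternative formulation of Theorem~\ref{propEisenBiggerHalf}, and your own $n=\pm1$ analysis in fact indicates that $e^{-2\pi y}$, rather than $y^{-1}e^{-2\pi y}$, is the true rate whenever $c_{\pm1}\neq0$. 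Since the corollary's substance is exponential decay to the constant term, none of this affects the conclusion.
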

\begin{proof}
 The result is immediate for the Fourier expansion at the standard cusp.  For the Fourier expansion at other cusps, the analog of Theorem~\ref{propEisenBiggerHalf} holds with analogous proof.  This gives the desired result.
\end{proof}

\begin{rema}  We now compare our bounds for the Eisenstein series with those of others.  \begin{enumerate}
\item For $r >1$ and $t \in \RR$, it can be shown that $E^{(j)}_0(z, r + it) = \delta_{j1}y^{r+it} +O(y^{1-r}) + O((1+y^{-r})e^{-2\pi y})$ where the later implied constant depends on $t$ (and the lattice $\Ga$)~\cite[Corollary~3.5]{Iwa02}.  Our bound, however, makes the $t$ dependance (for $|t| \geq t_0$) explicit.  Also, as $y \rightarrow \infty$, our result gives faster decay ($y^{-1}e^{-2 \pi y}$ versus $e^{-2 \pi y}$) to the constant term of the Fourier expansion.

\item For $r= 1/2$, there has been some recent interest on bounds for the Eisenstein series.  In particular, the sup-norm problem for certain eigenfunctions has had much interest (see~\cite{IwaSar95, BHMM20, Tem15} for example).  Specifically, for Eisenstein series, there are  recent results in~\cite{You18, HuXu17, As19} of which the most relevant for us is the result by Huang and Xu (generalizing the earlier result of Young) for the modular group $\Ga = \PSL_2(\ZZ)$~\cite[Theorem~1.1]{HuXu17}: \begin{align*}\label{eqnHuangXu}
 E_0(z, 1/2 + it) = y^{1/2+it} +O(y^{1/2}) + O(y^{-1/2} +t^{3/8+\varepsilon}). \end{align*}  (As $\Ga = \PSL_2(\ZZ)$ has only one cusp, we have dropped the superscript notation in the Eisenstein series.)  Note that the bound on the Eisenstein series given by Huang and Xu does not decay exponentially to the constant term of its Fourier expansion as $y \rightarrow \infty$.  Our bound, however, has this exponential decay.

\end{enumerate}

\end{rema}

%

\subsection{Outline of paper}  Section~\ref{subsecBndsBessel} is devoted to the proof of Theorems~\ref{thmFirstCaseMonoKBessel} and~\ref{thmSecondCaseOscillKBessel}.  Section~\ref{subsecBndsBesselUnif} is the devoted to the prove of Theorems~\ref{thmFirstCaseMonoKBesselUnif} and~\ref{thmSecondCaseOscillKBesselUnif}.  Section~\ref{secBndKBesselSmally} gives a proof of Proposition~\ref{lemmUBndsOnK}.  Finally, Section~\ref{secBndEisenstein} gives a proof Theorems~\ref{lemmBndSumcnsquares} and~\ref{propEisenBiggerHalf}.

\subsection*{Acknowledgements}  I would like to thank one of the referees for pointing me to Laplace's method and all of the referees for their comments.

\section{Bounds for  $K_\nu(y)$ where $\Im(\nu)$ large, $\Re(\nu)$ bounded, and $y$ is real and positive}\label{subsecBndsBessel}  


For background on asymptotic expansions, see~\cite{Cop} (especially Chapter~7) for example.  The saddle points and paths of steepest descent for the function $K_{it}(y)$ (i.e. purely imaginary order) have been obtained by N.~M.~Temme~\cite{Tem}.  The saddle points and paths of steepest descent for our function $K_{\nu}(y)$ are the same as we now show.  In addition, we give a proof of the dominant behavior.

In this section (Section~\ref{subsecBndsBessel}), let us set \[\nu:= r +i t\] where $r, t \in \RR$.  An integral representation for $K_{\nu}(z)$ (see~\cite[Page~182~(7)]{Wa} for example) is \begin{align}\label{IntRepKBessel}
K_{\nu}(z) = \frac 1 2 \int_{-\infty}^\infty e^{-z  \cosh R - \nu R } ~\wrt R = \frac 1 2 \int_{-\infty}^\infty e^{-z  \cosh R + \nu R } ~\wrt R
\end{align} where $z \in \CC \backslash \{0\}$ such that $|\arg(z)|< \frac{\pi}{2}$.  There are two cases:  $y \geq t \geq 0$ and $0 < y \leq t$.

\begin{rema}\label{rmkNegativeTIsOK}
Note that if $t <0$, then applying (\ref{IntRepKBessel}) allows us to be in one of these two cases.

\end{rema}

\subsection{First case:  $y \geq t \geq 0$} \label{subsecFirstCaseMonoKBessel}

\begin{proof}[Proof of Theorem~\ref{thmFirstCaseMonoKBessel}]

Let us first consider the case $0< \theta<\pi/2$.  Using (\ref{IntRepKBessel}), we have \begin{align}\label{IntRepKBessel2}
 K_\nu(y) = \frac 1 2 \int_{-\infty}^\infty e^{-y \varphi(R)} e^{rR} ~\wrt R  \end{align}where \[\varphi(R):= \cosh R - i R \sin \theta.\]
 
 The saddle points (values of $R$ for which $\varphi'(R)=0$) are as follows~\cite{Tem} (see also~\cite[Section~2.1]{BST}
): \[R_k := i\left( (-1)^k \theta + k\pi\right), \quad k \in \ZZ.\]

Let us now write $R=u+iw$ and thus we have \begin{align*}
\Re(-\varphi(R)) =& -\cosh u \cos w - w \sin \theta \\
  \Im(-\varphi(R)) =& -\sinh u \sin w + u \sin \theta \end{align*}

The path of steepest descent through the saddle point $R_0 = i \theta$ is given by $ \Im(-\varphi(R)) =  \Im(-\varphi(R_0))$ and is the following curve~\cite{Tem}: \[w = \arcsin\left(\sin \theta \frac{u}{\sinh u} \right), \quad -\infty < u < \infty.\]  We remark that $w'(0)=0$ and that $w'(u)$ is bounded over all $-\infty < u <\infty$.

We will apply Laplace's method, which can be found at~\cite[Page~127, Theorem~7.1]{Olv}.  Using (\ref{IntRepKBessel2}), the path of steepest descent as the integration path used in Laplace's method, and $R_0=i\theta$ as the saddle point, we see that assumptions (i) -- (iv) of Laplace's method is satisfied.  

It remains to show that the final condition (v) is also satisfied.  We know that the integration path is a path of steepest descent because along it $\Im(\varphi(u+iw))$ is constant and, when $u \rightarrow \pm \infty$, we have that $\Re(\varphi(u+iw)) \rightarrow \infty$.  As $R_0$ is the only saddle point lying on the path of steepest descent, then $R_0$ is a global minimum on the path (see~\cite[Page~66]{Cop}).  Thus, condition (v) is satisfied and we may apply Laplace's method to obtain the desired result.

The case $\theta =0$ is a simplification of the case $0< \theta< \pi/2$.

Let us now consider the case $\theta = \pi/2$ (or, equivalently, $t=y$).   Apply~\cite[Page~78 (8) and Page~247 (5)] {Wa} to obtain \[K_{r+iy}(y) \sim \frac 1 2 \pi i e^{\frac 1 2(r+iy)\pi i} \left(- \frac 2{3\pi} e^{\frac 2 3 \pi i} \sin(\pi /3) \frac{\Ga(\frac 1 3)}{\left(\frac 1 6 iy\right)^{1/3}}\right)\] as $y \rightarrow \infty$.  Simplifying gives the desired result for the case $\theta = \pi/2$. 

This gives the desired result in all cases.
 
\end{proof}

\subsection{Second case:  $0 < y < t$}\label{subsecSecondCaseYLessT}

Let define the constant $\mu>0$ by $t = y \cosh \mu$ and the function \[\psi(u) := \cosh u \cos w +w \cosh \mu.\]  We start by finding the saddle points and a suitable path.

Using (\ref{IntRepKBessel}), we have \begin{align}\label{IntRepKBessel3a}
 K_\nu(y) = \frac 1 2 \int_{-\infty}^\infty e^{-y \phi(R)} e^{rR} ~\wrt R  \end{align}where \[\phi(R):= \cosh R - i R\cosh \mu.\]
 
 The saddle points (values of $R$ for which $\phi'(R)=0$) are as follows~\cite{Tem} (see also~\cite[Section~2.1]{BST}
): \[R^\pm_k := \pm \mu + i\left( \frac \pi 2 + 2k\pi\right), \quad k \in \ZZ.\]

Let us now write $R=u+iw$ and thus we have \begin{align*}
\Re(-\phi(R)) =& -\cosh u \cos w  -w \cosh \mu = - \psi(u),\\
  \Im(-\phi(R)) =& -\sinh u \sin w + u \cosh \mu. \end{align*}

The paths of steepest descent/ascent through the saddle points $R^\pm_k$ is given by $ \Im(-\phi(R)) =  \Im(-\phi(R^\pm_k))$ and is the following family of curves~\cite{Tem}: \[ \sin w = \cosh \mu \frac {u}{\sinh u} \pm \frac{\sinh \mu - \mu \cosh \mu}{\sinh u}.\]  We use only the parts of these curves as shown in~\cite[Figure~3.3]{Tem}, which we will refer to as the path of steepest descent.  Notice that this path is the union of two branches $\eL^- \cup \eL^+$, separated by the imaginary axis, where \begin{align*}
 \textrm{ --- }& \eL^- \textrm { runs from } -\infty \textrm { to } 0 \textrm { and from } 0 \textrm{ to } + i\infty, \\  \textrm{ --- }& \eL^+ \textrm { runs from } +i\infty \textrm { to } 0 \textrm { and from } 0 \textrm{ to } + \infty. \end{align*}  What is important about this path is that, on both of the branches, the function $y\phi(R)$ has constant imaginary part, namely  \begin{align*} \chi := \Im(y\phi(R_0^+)):=&y \left(\sinh \mu  - \mu \cosh \mu \right) \\ =& y \sinh \mu - t \cosh^{-1}\left(\frac t y\right)= \sqrt{t^2 - y^2}  - t \cosh^{-1}\left(\frac t y\right), \\   \chi_- := \Im(y\phi(R_0^-)) =& - \chi
  \end{align*} for $\eL^+$ and $\eL^-$, respectively.

\begin{proof}[Proof of Theorem~\ref{thmSecondCaseOscillKBessel}]   We will use Laplace's method, which can be found at~\cite[Page~127, Theorem~7.1]{Olv}.

Using (\ref{IntRepKBessel3a}), we note that the integral representation is the correct form to apply Laplace's method.  We will use what we called the path of steepest descent as the path of integration; see~\cite[Figure~3.3]{Tem} for the graph.  We split this path of integration into two parts, the first from $-\infty$ to $i \infty$ and the second from $i \infty$ to $\infty$.  We apply Laplace's method separately to the two integration paths and, by the Cauchy-Goursat theorem, add the results together.   Let us first consider the integration path from $i\infty$ to $\infty$.  We see that conditions (i) -- (iv) of Laplace's method are satisfied.  

It remains to show that the final condition (v) is also satisfied.  We know that the integration path is a path comprised of steepest descent/ascent pieces because along it $\Im(\phi(u+iw))$ is constant and, when $u \rightarrow \infty$, we have that $\Re(\phi(u+iw)) \rightarrow \infty$.  As $R^+_0$ is a saddle point lying on the path, then $R^+_0$ is a local minimum on the path and, moreover, other local minima occur at the other saddle points (see~\cite[Page~66]{Cop}), which for us are $R^+_k$ where $k \in \NN$.  Directly computing, we see that $\Re(\phi(R^+_k) > \Re(\phi(R^+_0)$ for all $k \in \NN$.  Hence, $R^+_0$ gives the global minimum.  Thus, condition (v) is satisfied and we may apply Laplace's method to obtain \[e^{-y \phi(R^+_0)} \Ga\left(\frac 1 2 \right) \frac{a_0}{\sqrt{y}}\] as $y \rightarrow \infty$ where $\phi (R) = \cosh R-i R \cosh \mu$ and \[a_0= \frac{e^{rR}}{(2 \phi'')^{1/2}}\] evaluated at $R^+_0$.  Computing, we have that the contribution from this part of the path of integration to the dominant term is the following: \[\frac{\sqrt{\pi} e^{r \mu +i r\frac\pi 2}}{\sqrt{2 i y \sinh \mu}} e^{-y\left(\frac \pi 2 \cosh \mu +i (\sinh \mu - \mu \cosh \mu) \right)}.\] 

Likewise, for the other part of the path of integration, Laplace's method gives \[\frac{\sqrt{\pi} e^{-r \mu +i r\frac\pi 2}}{\sqrt{-2 i y \sinh \mu}} e^{-y\left(\frac \pi 2 \cosh \mu -i (\sinh \mu - \mu \cosh \mu) \right)}.\]  Here the saddle point which gives the global minimum is $R^-_0$ and the other saddle points $R^-_k$ where $k \in \NN$ are larger and can be ignored as before.

Adding these two parts together yields the desired dominant term of the asymptotic expansion for $K_\nu(y)$.  This gives the desired result.

\end{proof}

\section{Uniform bounds for  $K_\nu(y)$ where $\Im(\nu)$ large, $\Re(\nu)$ bounded, and $y$ is real and positive near coalescing saddle points}\label{subsecBndsBesselUnif} 



Already, when $r=0$, C.~Balogh computed a uniform asymptotic expansion which is valid  for all cases including the case of two nearby saddle points~\cite{Bal66}.  Balogh used a technique involving differential equations, but it is not clear that such a technique will work when $r$ is no longer zero.  We will use another technique, developed by C.~Chester, B.~Friedman, and F.~Ursell~\cite{CFU57}, which will yield the uniform dominant and next dominant terms for the case where $t$ and $y$ are nearly equal (or equal) and $r$ bounded.

\subsection{First case:  $y \geq t \geq 0$}  We prove Theorem~\ref{thmFirstCaseMonoKBesselUnif} in this section. Let \[F(R):=F(R, \theta):= -\cosh R + iR \sin \theta.\]  Then we have that  \begin{align}
 K_\nu(y) = \frac 1 2 \int_{-\infty}^\infty e^{y F(R)} e^{rR} ~\wrt R.\end{align}  The path of steepest descent has been obtained by N.~M.~Temme~\cite[Figure~2.1]{Tem} and the saddle points (values of $R$ for which $F'(R)=0$) that are relevant are $R_0:=i \theta$ and $R_1:=i(\pi - \theta)$.  Note that $R_0$ and $R_1$ are close in the complex plane when $\theta$ is close to $\pi/2$.  The technique used to estimate the $K$-Bessel function in Theorems~\ref{thmFirstCaseMonoKBessel} and~\ref{thmSecondCaseOscillKBessel} depends on the distance between $R_0$ and $R_1$ and, hence, does not yield a uniform estimate.


To use the Chester-Friedman-Ursell technique, let us introduce \begin{align*}\widetilde{\theta} &= \theta - \pi/2 \\ S & = 2^{-1/3}\left(iR +  \pi/2\right)
  \end{align*} where $2^{1/3}(1 - \cos \widetilde{\theta})$ assumes the role of the parameter $\alpha$ from the Chester-Friedman-Ursell technique (see~\cite[(3.2)]{CFU57}).  Under the change of variable from $R$ to $S$, the integral becomes \begin{align}\label{IntRepKBessel2CFU}
 K_\nu(y) =\int_{-i\infty+2^{-4/3}\pi}^{i\infty+2^{-4/3}\pi} \frac {-i} {2^{2/3}} e^{ir\left(\pi/ 2 - 2^{1/3}S\right)} e^{y F(i\pi/ 2 - i2^{1/3}S, \widetilde{\theta}+\pi/ 2)}~\wrt S.\end{align} and the relevant saddle points become \begin{align} \label{eqnSaddlePointsCFU}
S_0 &= 2^{-1/3}\left(iR_0 +  \pi/2\right) = -2^{-1/3}\widetilde{\theta} \\ \nonumber S_1 &=  2^{-1/3}\left(iR_1 +  \pi/2\right)  = 2^{-1/3}\widetilde{\theta}.  \end{align}  We will represent $F(i\pi/ 2 - i2^{1/3}S, \widetilde{\theta}+\pi/ 2)$ by the cubic~\cite[(2.1)]{CFU57} \begin{align}\label{eqnCFURep} F(i\pi/ 2 - i2^{1/3}S, \widetilde{\theta}+\pi/ 2) = \frac 1 3 u^3 - \zeta(\widetilde{\theta})u+ A(\widetilde{\theta})
  \end{align} where, under this representation, the saddle points correspond as follows:  \begin{align*} S_0 &\leftrightarrow u=\zeta^{\frac 1 2} (\widetilde{\theta}) \\ S_1 &\leftrightarrow u=-\zeta^{\frac 1 2} (\widetilde{\theta}).  
  \end{align*} By substitution in (\ref{eqnCFURep}), we have \begin{align*} F(i\pi/ 2 - i2^{1/3}S_0, \widetilde{\theta}+\pi/ 2) &= -\frac 2 3 \zeta^{\frac 3 2}(\widetilde{\theta})+ A(\widetilde{\theta}) \\ F(i\pi/ 2 - i2^{1/3}S_1, \widetilde{\theta}+\pi/ 2) &= \frac 2 3 \zeta^{\frac 3 2}(\widetilde{\theta})+ A(\widetilde{\theta}),
  \end{align*} which yields \begin{align*} A(\widetilde{\theta}) &= - \frac \pi  2 \cos \widetilde{\theta} \\  \zeta(\widetilde{\theta}) &= \left(\frac 3 2 \left( \widetilde{\theta} \cos \widetilde{\theta} -\sin \widetilde{\theta} \right)\right)^{2/3}.
  \end{align*}  Here, we have taken the branch of $\zeta(\widetilde{\theta})$ required by the Chester-Friedman-Ursell technique (see the top of page~603 in~\cite{CFU57}).  (And thus $\zeta(\widetilde{\theta})$ are positive real numbers.) Note that \[ \zeta(\widetilde{\theta}) \sim \frac{\widetilde{\theta}^2}{2^{2/3}} \sim 2^{1/3}(1 - \cos \widetilde{\theta}) \text{ as } \widetilde{\theta} \rightarrow 0.\]  Locally, the representation is analytic in $u$ which yields~\cite[(2.2)]{CFU57} \begin{align}\label{eqnCFUMain}\frac {-i} {2^{2/3}} e^{ir\left(\pi/ 2 - 2^{1/3}S\right)} \frac{\wrt S}{\wrt u} = \sum_{m=0}^\infty p_m(\widetilde{\theta})(u^2 - \zeta)^m +  \sum_{m=0}^\infty q_m(\widetilde{\theta})u (u^2 - \zeta)^m.
  \end{align}  Note that $-\widetilde{\theta}\geq 0$.  For small enough $-\widetilde{\theta}$ (independent of $y$ and $t$)~\cite[Lemma]{CFU57}, we have that the dominant term of the asymptotic expansion of $K_\nu(y)$ is~\cite[(5.2 -- 5.4), Theorem~2]{CFU57} \[2\pi ie^{yA(\widetilde{\theta})} p_0(\widetilde{\theta}) \frac{\Ai(y^{2/3}\zeta)}{y^{1/3}}\] and the next term is \[-2\pi i e^{yA(\widetilde{\theta})} q_0(\widetilde{\theta})\frac{\Ai'(y^{2/3}\zeta)}{y^{2/3}}.\]  
  
We now compute $p_0(\widetilde{\theta})$ and $q_0(\widetilde{\theta})$.  Taking first and second derivatives in (\ref{eqnCFURep}), we have\begin{align*} 2^{1/3}\left( \cos \widetilde{\theta} - \cos (2^{1/3}S)  \right) \frac{\wrt S}{\wrt u} = u^2 - \zeta \\ 2^{2/3}
\sin (2^{1/3}S) \left(\frac{\wrt S}{\wrt u}\right)^2 +  2^{1/3}\left( \cos \widetilde{\theta} - \cos (2^{1/3}S)  \right)  \frac{\wrt{}^2 S}{\wrt u^2} = 2 u.
  \end{align*} Substituting the two saddle points into the second derivative equation yields \begin{align*} \left(\frac{\wrt S}{\wrt u} \bigg\rvert_{u= \zeta^{1/2}}\right)^2 =\frac {2^{1/3} \zeta^{1/2}}{\sin(-\widetilde{\theta})}= \left(\frac{\wrt S}{\wrt u} \bigg\rvert_{u= -\zeta^{1/2}}\right)^2.
    \end{align*}  We now wish to determine the signs of square roots of these two expressions.  The Chester-Friedman-Ursell technique gives us that our representation is locally uniformly analytic in $S$ and $\widetilde{\theta}$, and thus we may take the limits $S \rightarrow 0$ and $\widetilde{\theta} \rightarrow 0$ in either order in the first derivative equation.  Now we have that $S =0 \leftrightarrow u=0$ (see the top of page~605 in~\cite{CFU57}).  Taking first $S \rightarrow 0$, we conclude that \begin{align*} \frac{\wrt S}{\wrt u} \bigg\rvert_{u= \zeta^{1/2}} =  \sqrt{\frac {2^{1/3} \zeta^{1/2}}{\sin(-\widetilde{\theta})}} = \frac{\wrt S}{\wrt u} \bigg\rvert_{u= -\zeta^{1/2}}
    \end{align*} for all  $-\widetilde{\theta}$ small enough.  
    
 Now plugging in the two saddle points into (\ref{eqnCFUMain}), we solve for $p_0(\widetilde{\theta})$ and $q_0(\widetilde{\theta})$:  \begin{align*} p_0(\widetilde{\theta}) &= \frac{\frac {-i} {2^{2/3}}\left(e^{ir\left(\pi/ 2 - 2^{1/3}S_0\right)} +e^{ir\left(\pi/ 2 - 2^{1/3}S_1\right)} \right)\sqrt{\frac {2^{1/3} \zeta^{1/2}}{\sin(-\widetilde{\theta})}} } 2 = \frac{-i}{\sqrt 2} e^{i r \pi/2} \cos (r \widetilde{\theta}) \sqrt{\frac {\zeta^{1/2}}{\sin(-\widetilde{\theta})}} 
 \\ q_0(\widetilde{\theta}) &= \frac{\frac {-i} {2^{2/3}}\left(e^{ir\left(\pi/ 2 - 2^{1/3}S_0\right)} -e^{ir\left(\pi/ 2 - 2^{1/3}S_1\right)} \right)\sqrt{\frac {2^{1/3} \zeta^{1/2}}{\sin(-\widetilde{\theta})}} } {2 \zeta^{1/2}} = \frac{1}{\sqrt 2} e^{i r \pi/2} \sin (r \widetilde{\theta}) \zeta^{-1/2}\sqrt{\frac {\zeta^{1/2}}{\sin(-\widetilde{\theta})}}.
  \end{align*}
  
  Thus, the desired dominant term and the next dominant term, respectively, are \begin{align}\label{eqnDomNextDomTermsCFU} &\frac {\pi \sqrt{2}} {y^{1/3}} e^{-y \frac \pi 2 \cos \widetilde{\theta} +i r \frac \pi 2}  \cos (r \widetilde{\theta}) \sqrt{\frac {\zeta^{1/2}}{\sin(-\widetilde{\theta})}} \Ai(y^{2/3} \zeta)
  \\ \nonumber &\frac {-i\pi \sqrt{2}} {y^{2/3}} e^{-y \frac \pi 2 \cos \widetilde{\theta} +i r \frac \pi 2}  \sin (r \widetilde{\theta}) \zeta^{-1/2}\sqrt{\frac {\zeta^{1/2}}{\sin(-\widetilde{\theta})}} \Ai'(y^{2/3} \zeta).
  \end{align}
  
  Finally, to finish the first case, we need to show that outside of a small enough neighborhood of the two saddle points, the integral is negligible (see~\cite[Section~5]{CFU57}).  It is a routine calculation to see that, outside of the small enough neighborhood of the two saddle points, the integral is on the order of $e^{-t \widetilde{\alpha}}$ for some $\widetilde{\alpha} > \pi/2$.  This concludes the proof of the first case, namely Theorem~\ref{thmFirstCaseMonoKBesselUnif}.

  %


 \subsection{Second case:  $0 < y < t$} We prove Theorem~\ref{thmSecondCaseOscillKBesselUnif} in this section.  Let \[G(R):= G(R, \mu):= -\cosh R + i R\cosh \mu.\]
Using (\ref{IntRepKBessel}), we have \begin{align}\label{IntRepKBessel3}
 K_\nu(y) = \frac 1 2 \int_{-\infty}^\infty e^{y G(R)} e^{rR} ~\wrt R.\end{align}  The paths of steepest descent/ascent has been obtained by N.~M.~Temme~\cite{Tem}, and the saddle points that are relevant are $R_0 :=  \mu + i\pi /2$ and $R_1 :=  -\mu + i\pi /2$.
 
 
 It is a routine calculation to see that, outside of a small enough neighborhood of the two saddle points, the integral is on the order of $e^{-t \widetilde{\alpha}}$ for some $\widetilde{\alpha} > \pi/2$ and, thus, negligible.  To finish, we compute the dominant term and the next dominant term using the Chester-Friedman-Ursell technique.  Changing variables \begin{align*}\widetilde{\theta} &= -i \mu \\ S &= 2^{-1/3}(i R + \pi/2), \end{align*} in (\ref{IntRepKBessel3}), we obtain (\ref{IntRepKBessel2CFU}) and the relevant saddle points become (\ref{eqnSaddlePointsCFU}).  Now we have essentially transformed the second case into the first case. There are a few minor differences, which we now state.  When $\mu>0$, the parameter $2^{1/3}(1 - \cos \widetilde{\theta})$ is a negative real number, and thus the Chester-Friedman-Ursell technique requires us to take the branch of $\zeta(\widetilde{\theta})$ for which it is a negative real number.  Thus, $\frac {\zeta^{1/2}}{\sin(-\widetilde{\theta})}$ is a positive real number and the sign of $\sqrt{\frac {2^{1/3} \zeta^{1/2}}{\sin(-\widetilde{\theta})}}$ is determined in a similar way.  Thus, we obtain the dominant and next dominant terms in (\ref{eqnDomNextDomTermsCFU}).  This concludes the proof of the second case, namely Theorem~\ref{thmSecondCaseOscillKBesselUnif}.



\section{Bounds for  $K_{r-1/2+it}(y)$ for $0<y<1$ and $1/2 \leq r \leq 3/2$}\label{secBndKBesselSmally}

Finally, we give an estimate of $K_{r-1/2+it}(y)$ for small positive real argument.  Recall that we pick $t_0\geq1$ to be a fixed large constant (large enough to use the first term in the Stirling asymptotic series for the gamma function for the approximation below).

\begin{proof}[Proof of Proposition~\ref{lemmUBndsOnK}]
Since we need only need a bound for small $y$, it suffices to adapt the bound for purely imaginary order from~\cite[Section~3.1]{BST}.  It is well-known (see~\cite[Page~78, (6), Page~77, (2)]{Wa} for example) that the $K$-Bessel function is defined as \begin{align}\label{eqnKInTermsOfI}
K_{\nu}(z) := \frac 1 2 \pi \frac{I_{-\nu}(z) - I_\nu(z)}{\sin(\nu \pi)}  \end{align} where $I_\nu(z)$ is the modified Bessel function of the first kind \[I_\nu(z) := \sum_{m=0}^\infty \frac{(\frac1 2 z)^{\nu+2 m}}{m! \Ga(\nu + m+1)}.\]  (The $\Ga$ here is the gamma function, not the lattice subgroup.)

We would like to bound $K_{r - 1/2 +i t}(y)$.  An elementary identity gives a lower bound for \begin{align}\label{eqnSinSinchEst}
 \left|\sin\left(r\pi - 1/2\pi +i t\pi \right)\right|  = \left|\frac {-1} {2i} \left( e^{t \pi} e^{-i(r-1/2)\pi}- e^{-t \pi}  e^{i(r-1/2)\pi} \right)\right| \geq \frac 1 2 e^{|t| \pi} - \frac 1 2
\end{align} for all $t$.  

Taking the first term of the Stirling asymptotic series for the gamma function, we have \[\Ga(s) = \sqrt{2 \pi} s^{s - 1/2} e^{-s} e^{R(s)}\] where $R(s) = o(|s|^{-1})$.  Hence, we have \begin{align*}
|m!\Ga(r-1/2+ & it + m+1)| \\ &= 2 \pi m^{m+1/2} e^{-m} |r+1/2+m+it|^{r+m} e^{-t \arg(r+1/2+m+it)}e^{-r-1/2-m}e^{o(|r+1/2+m+it|^{-1})}  \\&  \geq C |t|^r e^{-\frac{|t|\pi}2}\end{align*} where the constant $0<C$ depends only on $t_0$.  Note that, since $r+1/2+m >0$, we have that $0\leq \arg(r+1/2+m+it)\leq \pi/2$ for $t>0$ and $ -\pi/2 \leq \arg(r+1/2+m+it)\leq0$ for $t<0$.  Likewise, we have \[|m!\Ga(-r+1/2-  it + m+1)| \geq C |t|^{1-r} e^{-\frac{|t|\pi}2}.\]

Now, for $0<y<2$, we have that $\sum_{m=0}^\infty (y/2)^{2m} \leq 4/(4 - y^2)$.  All of this now implies that \[|K_{r - 1/2 +i t}(y)| \leq \pi \frac{\frac 4 {4-y^2}\left((\frac y 2)^{r-1/2} + (\frac y 2)^{1/2-r}\right)\frac{e^{|t|\pi/2}}{C \min(|t|^r, |t|^{1-r})}}{e^{|t|\pi} -1} \leq \widetilde{C} y^{1/2-r} e^{-|t|\pi/2} |t|^{r-1},\]  where $\widetilde{C}$ depends only on $t_0$ and is uniformly bounded for all large enough $t_0$.  This is the desired result.

\end{proof}


\section{Eisenstein series}\label{secBndEisenstein}


\subsection{Bounds on the Fourier coefficients of Eisenstein series}


Using our result on the asymptotics of the $K$-Bessel function, we now give the proof of Theorem~\ref{lemmBndSumcnsquares}, namely a bound for the sum of the $c_n$.

\begin{proof}[Proof of Theorem~\ref{lemmBndSumcnsquares}]
The proof is an adaption of the proof of~\cite[Proposition~4.1]{St}, which is, itself, an adaption of~\cite[Proposition~5.1]{Wol}.  Let $0<Y<H$ be given and define \[J:= \int_\D \left|E^{(j)}_0(z,s)\right|^2 \frac{\wrt x~\wrt y}{y^2} \quad \textrm{ where } \D:= (0,1) \times (Y,H).\]  Let $B := \max(B_0, H, Y^{-1})$.  Since $E^{(j)}_0(z,s)$ is automorphic, we can apply exactly the same proof as in~\cite[Proposition~4.1]{St} to obtain \[J \leq O(1 + Y^{-1}) \int_{F_B} \left|E^{(j)}_0(z,s)\right|^2 \frac{\wrt x~\wrt y}{y^2},\] where, recall, $F_B$ is the bounded part (i.e. with cusps removed) of $\overline{F}$.  Let us define the modified Eisenstein series in which we remove the zeroth term of the Fourier expansion:  \[E^{(j)}_{0,B}(z,s):= \begin{cases} 
      E^{(j)}_0(z,s) & \textrm{if } z \in F_B\\
      E^{(j)}_0(z,s) - \delta_{jk} \left(Im (\sigma_k z)\right)^s - \varphi_{jk}(s) \left(Im (\sigma_k z)\right)^{1-s}  & \textrm{if } z \in \Ce_{k,B}.
   \end{cases}\]
   
 By the Maass-Selberg relation~\cite[Page~301~(3.43), Page~281]{He}, we have\begin{align*}
 \sum_{j=1}^q \int_{F_B} & \left|E^{(j)}_{0,B}(z,s)\right|^2 \frac{\wrt x~\wrt y}{y^2}  \\ & = \frac {1}{2r-1}\left(q B^{2r-1} - B^{1 -2r} \sum_{j=1}^q \sum_{j'=1}^q |\varphi_{j j'}(s)|^2\right) +  \sum_{j=1}^q Re\left( \overline{\varphi_{j j}(s)}\frac{B^{2it}}{it}\right). \end{align*}  Applying~\cite[Page~300 (3.38)]{He} yields \[ J \leq O(1 + Y^{-1}) \left(B^{2r-1} +\omega(t)\right).\]
 
 
Substituting the Fourier expansion of the Eisenstein series (\ref{eqnFourExpInYEisen2}) in the definition of $J$ and applying Parseval's formula yields \begin{align*} J \geq \sum_{n\neq0} |c_n|^2 \int_{2 \pi |n| Y}^{2 \pi |n| H} |K_{r-1/2+it}(y)|^2 \frac {\wrt y}{y}.\end{align*}  Let $Y = |t|/(8 \pi N)$ and $H = |t|/(4 \pi)$.  With this choice, we have \[\left[ |t|/4, |t|/2\right] \subset \left[2 \pi |n|Y, 2 \pi |n| H \right] \quad \textrm{ whenever } 1 \leq |n| \leq N,\] and, hence,\[\sum_{1 \leq |n|\leq N}|c_n|^2 \leq C^{-1}J \textrm{ where }C =  \int_{|t|/4}^{|t|/2} |K_{r-1/2+it}(y)|^2 \frac {\wrt y}{y}.\]  Theorem~\ref{thmSecondCaseOscillKBessel} now gives that \[C^{-1} \leq O(|t| e^{|t| \pi}).\]  Combining, we obtain the desired result:  \[\sum_{1 \leq |n|\leq N}|c_n|^2 = O\left(e^{|t|\pi} (N+|t|)\right)\left\{\omega(|t|) + \left(|t| + \frac{N}{|t|}\right)^{2r-1}  \right\}.\]

\end{proof}

\subsection{Bounds on Eisenstein series}


We now give the proof of Theorem~\ref{propEisenBiggerHalf}, namely a bound for the Eisenstein series themselves.  Recall that we defined $s:=r+it$.  



\begin{lemm}\label{lemmCoarseBndKBessel}
Fix $M>0$ and $y_0 >0$. Let $|r|\leq M$ and $y \geq y_0$.  Then \[K_{r+it}(y) = O\left(\frac{e^{-y}}{\sqrt y}\right)\] where the constant depends only on $M$ and $y_0$.
\end{lemm}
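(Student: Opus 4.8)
The plan is to obtain the bound directly from the integral representation~(\ref{IntRepKBessel}), without invoking any of the more delicate saddle-point machinery, since here we only need a crude estimate that is uniform in $r$ for $|r| \le M$ and in $y \ge y_0$. First I would write, using the first form of~(\ref{IntRepKBessel}) with $z = y$ real and positive,
\begin{align*}
|K_{r+it}(y)| \le \frac 1 2 \int_{-\infty}^\infty e^{-y \cosh R} e^{|r| |R|} ~\wrt R \le \frac 1 2 \int_{-\infty}^\infty e^{-y \cosh R} e^{M |R|} ~\wrt R,
\end{align*}
so the $t$-dependence drops out entirely and it remains to bound a real integral.

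Next I would extract the factor $e^{-y}$ by writing $\cosh R = 1 + (\cosh R - 1)$, so that the integral becomes $\tfrac12 e^{-y}\int_{-\infty}^\infty e^{-y(\cosh R - 1)} e^{M|R|}~\wrt R$. On the range $|R| \le 1$ one uses $\cosh R - 1 \ge R^2/2 \cdot c_0$ for a suitable constant (indeed $\cosh R - 1 \ge R^2/2$ works), so that part of the integral is at most $e^M \int_{|R|\le 1} e^{-y R^2/2}~\wrt R = O(y^{-1/2})$ with constant depending only on $M$. For $|R| \ge 1$ one uses $\cosh R - 1 \ge c_1 e^{|R|}$ for some absolute $c_1 > 0$ together with $y \ge y_0$ to dominate the exponential growth $e^{M|R|}$: for $y \ge y_0$ and, say, $y \ge 2M/c_1$ the integrand decays like $e^{-(c_1 y/2) e^{|R|}}$, which is $O(1)$ (in fact exponentially small in $y$), so this tail is $O(1) \le O(y^{-1/2})$ after adjusting constants. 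For the finitely many $y$ with $y_0 \le y < 2M/c_1$ the bound $K_{r+it}(y) = O(y^{-1/2} e^{-y})$ holds trivially by continuity and compactness, so one may assume $y$ large without loss of generality. Combining the two ranges gives $|K_{r+it}(y)| \le C e^{-y} y^{-1/2}$ with $C = C(M, y_0)$.

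I do not expect a serious obstacle here; the only point requiring a little care is making the split of the range $|R| \le 1$ versus $|R| \ge 1$ uniform in $M$, i.e. ensuring the constant $c_1$ in $\cosh R - 1 \ge c_1 e^{|R|}$ is absolute (it is, with $c_1 = 1/4$ say, valid for $|R| \ge 1$) and then absorbing $e^{M|R|}$ into $e^{-(c_1 y) e^{|R|}}$ for $y$ past a threshold depending on $M$. Since the statement only claims an implied constant depending on $M$ and $y_0$, this threshold argument is harmless. An alternative, even shorter, route is simply to quote the classical uniform asymptotic $K_{\nu}(y) \sim \sqrt{\pi/(2y)}\, e^{-y}$ valid for bounded $|\nu|$ as $y \to \infty$ (e.g.~\cite[Page~202]{Wa}) together with continuity on the compact $\nu$-range to cover moderate $y$; but the self-contained integral estimate above is cleaner and keeps the paper's exposition uniform.
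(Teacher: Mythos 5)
Your proposal is correct and takes essentially the same approach as the paper: both bound the integral representation~(\ref{IntRepKBessel}) directly by lower-bounding $\cosh R$, extracting the factor $e^{-y}$ and obtaining $y^{-1/2}$ from the Gaussian piece $e^{-yR^2/2}$. The only difference is bookkeeping: where you split into $|R|\le 1$ and $|R|\ge 1$ (and treat moderate $y$ separately), the paper absorbs the factor $e^{rR}$ in one stroke by keeping the quartic term, i.e.\ using $\cosh R \ge 1 + R^2/2 + R^4/24$ and noting that $-yR^4/24 + rR$ is bounded above uniformly for $|r|\le M$, $y\ge y_0$.
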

\begin{proof}
When $|R| > (24 |r| y^{-1})^{1/3}$, we have that \[h(R):=-\frac{y R^4}{24} + rR < 0,\] and, thus, on the complement, the function $h(R)$ is bounded by a constant $N(M, y_0)>0$.  Using the integral representation (\ref{IntRepKBessel}) for the $K$-Bessel function, we have \[|K_{r+it}(y)| \leq \frac 1 2 \int_{-\infty}^\infty e^{-y\left(1 + \frac{R^2} 2 + \frac{R^4}{24}\right)+rR}~\wrt R \leq \frac {e^N} 2 \int_{-\infty}^\infty e^{-y\left(1 + \frac{R^2} 2 \right)}~\wrt R.\]  The desired result now follows. \end{proof}

The following lemma gives some bounds for the $K$-Bessel function that are convenient for our proof of Theorem~\ref{propEisenBiggerHalf}.  

\begin{lemm}\label{lemmUpperBndBessel}
Let $|t| \geq t_0$.  We have \[K_{r-1/2+it}(y) = \begin{cases} O\left(y^{1/2-r} e^{-|t| \frac \pi 2} |t|^{r-1}\right) & \textrm{ if }  0 <y < 1  \textrm{ and } 3/2 \geq r \geq1/2 \\ O\left(e^{-|t| \frac \pi 2 }|t|^{r-5/6}\right) & \textrm{ if }  1 \leq y < \frac \pi 2 |t| \textrm{ and } 2/3 > r \geq 1/2 \\ O\left(e^{-|t| \frac \pi 2 }|t|^{r-1}\right) & \textrm{ if }  1 \leq y < \frac \pi 2 |t| \textrm{ and } 3/2 \geq r \geq2/3\\ O\left( \frac{e^{-y}}{\sqrt y}\right) &\textrm{ if } y \geq \frac \pi 2 |t|  \textrm{ and } 3/2 \geq r \geq1/2\end{cases}\]
\end{lemm}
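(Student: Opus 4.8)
\textbf{Proof proposal for Lemma~\ref{lemmUpperBndBessel}.}

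The plan is to treat the four cases by collecting bounds that are already available in the paper. The first case, $0<y<1$ and $3/2\geq r\geq 1/2$, is precisely the content of Proposition~\ref{lemmUBndsOnK}, so nothing further is needed there. The last case, $y\geq\frac\pi 2|t|$ and $3/2\geq r\geq 1/2$, follows from Lemma~\ref{lemmCoarseBndKBessel}: writing the order as $(r-1/2)+it$, the real part $r-1/2$ is bounded by $1$, and $y\geq\frac\pi 2 t_0$ is bounded below, so Lemma~\ref{lemmCoarseBndKBessel} applies with $M=1$ and $y_0=\frac\pi 2 t_0$ and gives the stated $O(e^{-y}/\sqrt y)$. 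So the real work is in the two middle cases, where $1\leq y<\frac\pi 2|t|$.

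For the middle range $1\leq y<\frac\pi 2|t|$ I would split further according to the ratio $y/|t|$, using the nonuniform asymptotic of Theorem~\ref{thmSecondCaseOscillKBessel} when $y$ is comfortably smaller than $|t|$ (say $y<c|t|$ for a fixed $c<1$) and the uniform asymptotic of Theorem~\ref{thmSecondCaseOscillKBesselUnif} when $y$ is close to $|t|$ (i.e.\ $\cosh\mu$ close to $1$, equivalently $\mu\in[0,\mu_0]$). In the first subcase set $\mu=\cosh^{-1}(|t|/y)$; Theorem~\ref{thmSecondCaseOscillKBessel} gives $K_{r-1/2+it}(y)=O(y^{-1/2}(\sinh\mu)^{-1/2}e^{-y\frac\pi2\cosh\mu})$ with the sines, cosines, $\cosh(r\mu)$ and $\sinh(r\mu)$ factors all bounded by constants depending on $M$ and the separation from $\mu=0$. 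Then $y\cosh\mu=|t|$ gives the exponential $e^{-|t|\pi/2}$, while $y^{-1/2}(\sinh\mu)^{-1/2}=(y\sinh\mu)^{-1/2}=(t^2-y^2)^{-1/4}$; one checks $(t^2-y^2)^{-1/4}=O(|t|^{-1/2})$ in this subcase, which is $\leq O(|t|^{r-5/6})$ for $r\geq 1/3$ and $\leq O(|t|^{r-1})$ for $r\geq 1/2$, so both stated bounds hold. In the near-coalescence subcase $\mu\in(0,\mu_0]$, Theorem~\ref{thmSecondCaseOscillKBesselUnif} gives $K_{r-1/2+it}(y)$ as a combination of $y^{-1/3}e^{-y\frac\pi2\cosh\mu}\Ai(y^{2/3}\zeta)$ and $y^{-2/3}e^{-y\frac\pi2\cosh\mu}\Ai'(y^{2/3}\zeta)$ times bounded factors (the quartic root $(\zeta/(-\sinh^2\mu))^{1/4}$ stays bounded away from $0$ and $\infty$ as $\mu\to 0$ by Taylor expansion, and $\zeta^{-1/2}$ multiplies the $\Ai'$ term). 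Since $\zeta\leq 0$ here and the Airy function and its derivative are bounded on $(-\infty,0]$, this is $O(y^{-1/3}e^{-|t|\pi/2})$, and $y\geq 1$ together with $y\asymp|t|$ gives $y^{-1/3}\asymp|t|^{-1/3}$; comparing exponents, $|t|^{-1/3}$ is $\leq O(|t|^{r-5/6})$ for $r\geq 1/2$ and $\leq O(|t|^{r-1})$ for $r\geq 2/3$, matching the claim. Finally, at $\mu=0$ exactly (i.e.\ $y=|t|$) one uses the $\theta=\pi/2$ branch of Theorem~\ref{thmFirstCaseMonoKBessel} (or the $\mu\to 0$ limit of the uniform estimate), which gives $K_{r-1/2+it}(y)=O(y^{-1/3}e^{-|t|\pi/2})$, consistent with the above.

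The main obstacle will be bookkeeping the exponents rather than any genuine difficulty: one must verify that the three pieces (small $\mu$ uniform, moderate $\mu$ nonuniform, $y$ near $\frac\pi 2|t|$) overlap and together cover $1\leq y<\frac\pi2|t|$ with constants depending only on $t_0$ (and implicitly $M=3/2$), and that in each regime the algebraic prefactor $(t^2-y^2)^{-1/4}$ or $y^{-1/3}$ is dominated by $|t|^{r-5/6}$ when $1/2\leq r<2/3$ and by $|t|^{r-1}$ when $2/3\leq r\leq 3/2$. The borderline exponent $r=2/3$ is exactly where $|t|^{r-1}$ crosses $|t|^{-1/3}$, which is why the case split in the lemma occurs there; checking that the weakest of the available bounds still beats the claimed one on each side of $r=2/3$ is the only subtlety. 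Everything else is a direct substitution of the theorems proved in Sections~\ref{subsecBndsBessel} and~\ref{subsecBndsBesselUnif}, using $y\cosh\mu=|t|$ and $y\sinh\mu=\sqrt{t^2-y^2}$ to convert the $\mu$-dependence into $|t|$-dependence, together with the boundedness of $\Ai$, $\Ai'$ on $(-\infty,0]$ and of the trigonometric/hyperbolic prefactors.
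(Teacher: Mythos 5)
Your overall strategy -- assembling the four branches from Proposition~\ref{lemmUBndsOnK}, Lemma~\ref{lemmCoarseBndKBessel}, and the four Bessel asymptotics, with the split at $r=2/3$ coming from comparing $|t|^{-1/3}$ against $|t|^{r-1}$ -- is the same as the paper's, and your treatment of the first and fourth branches is fine. But there are two genuine problems in the middle range. First, you never cover the region $|t| < y < \frac{\pi}{2}|t|$. Your decomposition of $1\leq y<\frac{\pi}{2}|t|$ is entirely in terms of $\mu=\cosh^{-1}(|t|/y)$, which only exists for $y\leq|t|$; Theorems~\ref{thmSecondCaseOscillKBessel} and~\ref{thmSecondCaseOscillKBesselUnif} are the $0<y<t$ case and say nothing about $y>|t|$. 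For that sub-region one must switch to the monotone case, writing $t=y\sin\theta$ with $\theta<\pi/2$ and splitting at $y=|t|/\sin(\pi/2-\theta_0)$: Theorem~\ref{thmFirstCaseMonoKBesselUnif} handles $|t|\leq y\leq |t|/\sin(\pi/2-\theta_0)$ (giving $O(e^{-|t|\pi/2}|t|^{-1/3})$) and Theorem~\ref{thmFirstCaseMonoKBessel} handles $|t|/\sin(\pi/2-\theta_0)\leq y<\frac{\pi}{2}|t|$ (giving $O(e^{-|t|\pi/2}(y^2-|t|^2)^{-1/4})=O(e^{-|t|\pi/2}|t|^{-1/2})$, using that $\theta\sin\theta+\cos\theta-\frac{\pi}{2}\sin\theta>0$ there). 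Handling only the single point $y=|t|$ via the $\theta=\pi/2$ branch does not close this gap.

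Second, in the subcase $1\leq y\leq |t|/\cosh\mu_0$ your assertion that the factors $\cosh((r-\tfrac12)\mu)$ and $\sinh((r-\tfrac12)\mu)$ are "bounded by constants" is false: here $\mu=\cosh^{-1}(|t|/y)$ is bounded \emph{below} but not above (it grows like $\log(2|t|/y)$, up to roughly $\log(2|t|)$ when $y=1$), so $\cosh((r-\tfrac12)\mu)\asymp (|t|/y)^{r-1/2}$. The correct bound from Theorem~\ref{thmSecondCaseOscillKBessel} is therefore
\begin{equation*}
K_{r-1/2+it}(y)=O\left(\frac{e^{-|t|\pi/2}}{(t^2-y^2)^{1/4}}\left(\frac{|t|}{y}\right)^{r-1/2}\right)=O\left(e^{-|t|\pi/2}|t|^{r-1}\right),
\end{equation*}
not the $O(e^{-|t|\pi/2}|t|^{-1/2})$ you claim; this unbounded hyperbolic factor is precisely where the exponent $r-1$ in the lemma comes from. (The lemma's statement survives because $|t|^{r-1}$ is still within the claimed bounds, but your intermediate estimate is unjustified and stronger than what the theorem delivers.) A related point you should also make explicit, as the paper does, is that Theorem~\ref{thmSecondCaseOscillKBessel} is stated as $y\to\infty$ with $\mu$ fixed, whereas here $y$ may stay bounded (e.g.\ $y=1$) while $|t|\to\infty$ and $\mu$ varies; one needs the observations that the asymptotic is also valid as $t\to\infty$ and is uniform over $\mu$ bounded away from $0$.
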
 where the implied constants depend on $t_0$ in first, second, and third branches and has no dependance in the fourth branch.

\begin{proof}
For $0<y<1$, apply Proposition~\ref{lemmUBndsOnK}, and, for $y \geq \frac \pi 2 |t|$, apply Lemma~\ref{lemmCoarseBndKBessel}.

Let us now consider $|t| \leq y < \frac \pi 2 |t|$.  Let $\theta_0$ be as in Theorem~\ref{thmFirstCaseMonoKBesselUnif}.  For the range $\frac{|t|}{\sin(\pi/2 - \theta_0)} \leq y < \frac \pi 2 |t|$, we apply Theorem~\ref{thmFirstCaseMonoKBessel} with the observation that $-\sqrt{x^2 -1} + \arccos(1/x) < 0$ for $\frac \pi 2 > x>1$ to obtain \[K_{r-1/2+it}(y) = O\left(\frac{e^{-|t| \frac \pi 2}}{(y^2 - |t|^2)^{1/4}}\right) =  O\left(\frac{e^{-|t| \frac \pi 2}}{|t|^{1/2}}\right)\] where the implied constant depends only on $t_0$.  Note that the conclusion of Theorem~\ref{thmFirstCaseMonoKBessel} is uniform over all $0 < \theta \leq \pi/2 - \theta_1$ for any $\pi/2 >\theta_1>0$.  For the range $|t| \leq  y \leq \frac{|t|}{\sin(\pi/2 - \theta_0)}$, we apply Theorem~\ref{thmFirstCaseMonoKBesselUnif} to obtain \begin{align}\label{eqnUnifKBesselEstFirstCase}
 K_{r-1/2+it}(y) =  O\left(\frac{e^{-|t| \frac \pi 2}}{|t|^{1/3}}\right) \end{align} where the implied constant depends only on $t_0$.

Finally, let us consider $1 \leq y \leq |t|$.  Let $\mu_0$ be as in Theorem~\ref{thmSecondCaseOscillKBesselUnif}.  For the range $\frac{|t|}{\cosh \mu_0} \leq y \leq |t|$, we apply Theorem~\ref{thmSecondCaseOscillKBesselUnif} to also obtain (\ref{eqnUnifKBesselEstFirstCase}) where, likewise, the implied constant depends only on $t_0$.  For the range $1 \leq y \leq \frac{|t|}{\cosh \mu_0}$, we will apply Theorem~\ref{thmSecondCaseOscillKBessel} with two observations.  The first is that the conclusion

\begin{align*}
K_{r+it}(y) \sim \sqrt{\frac{2\pi}{y \sinh \mu}}e^{-y \frac \pi 2 \cosh \mu +i r \frac \pi 2}& \left[\cosh(r \mu) \sin\left(\frac \pi 4 - y \left(\sinh \mu  - \mu \cosh \mu \right)\right) \right. \\ &\left.\quad  -i \sinh(r \mu)\cos\left(\frac \pi 4 - y \left(\sinh \mu  - \mu \cosh \mu \right)\right)\right]
\end{align*} is also valid as $t \rightarrow \infty$.

 The second observation is that the conclusion of Theorem~\ref{thmSecondCaseOscillKBessel} is uniform over all positive $\mu$ bounded away from $0$, and, hence, is valid for $y>0$ arbitrarily close to $0$.   Applying Theorem~\ref{thmSecondCaseOscillKBessel}  for the range $1 \leq y \leq \frac{|t|}{\cosh \mu_0}$ yields \[K_{r-1/2+it}(y) = O\left(\frac{e^{-|t| \frac \pi 2}|t|^{r-1/2}}{\sqrt{|t|}}\right) =  O\left(e^{-|t| \frac \pi 2}|t|^{r-1}\right)\] where the implied constant depends only on $t_0$.   This gives the desired result.


%
%

\end{proof}


We also have the following bound, which we will use in the proof of Theorem~\ref{propEisenBiggerHalf}.

\begin{lemm}\label{lemmScatMatUnifBnd} For $3/2 \geq r \geq 1/2$, we have $\varphi_{jk}(r +it)$ is uniformly bounded for $|t| \geq 1$.
 
\end{lemm}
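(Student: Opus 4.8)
The plan is to combine the two classical boundary estimates for the scattering matrix with the Phragm\'en--Lindel\"of principle applied to the strip $1/2 \le \Re(s) \le 3/2$. Recall the standard structural facts about $\Phi(s) = (\varphi_{jk}(s))$ (see~\cite[Chapters~8~and~11]{He}): $\Phi$ extends to a meromorphic matrix-valued function on $\CC$, it satisfies the functional equation $\Phi(s)\Phi(1-s)=I$ and the symmetry $\varphi_{jk}(\bar s)=\overline{\varphi_{jk}(s)}$ (the latter follows from $E^{(j)}(z,\bar s)=\overline{E^{(j)}(z,s)}$ and~\eqref{eqnFourExpInYEisen2}), the matrix $\Phi(s)$ is unitary on the line $\Re(s)=1/2$, and in the half-plane $\Re(s)\ge 1/2$ the only poles of $\Phi$ lie on the real segment $(1/2,1]$ (they correspond to the residual spectrum). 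Since we assume $|t|\ge 1$, every $\varphi_{jk}$ is holomorphic on $\Omega := \{s=r+it : 1/2\le r\le 3/2,\ |t|\ge 1\}$, and in fact on a neighbourhood of $\Omega$.

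First I would record the two boundary bounds. On $\Re(s)=1/2$, unitarity of $\Phi(1/2+it)$ gives $|\varphi_{jk}(1/2+it)| \le \|\Phi(1/2+it)\|_{\mathrm{op}} = 1$ for all real $t$. On $\Re(s)=3/2$, I would use the absolutely convergent Dirichlet series representation $\varphi_{jk}(s) = \sqrt{\pi}\,\dfrac{\Gamma(s-1/2)}{\Gamma(s)}\sum_{c} N_{jk}(c)\,c^{-2s}$, valid for $\Re(s)>1$, where $c$ runs over the allowed moduli and $N_{jk}(c)$ counts the relevant double cosets, so that $N_{jk}(c)=O(c)$ (see~\cite[Chapter~3]{He} or~\cite[Chapter~3]{Iwa02}); for $\Re(s)=3/2$ the series is dominated by $\sum_c N_{jk}(c)c^{-3} = O(1)$, uniformly in $t$, while Stirling's formula gives $|\Gamma(s-1/2)/\Gamma(s)| = O(|t|^{-1/2})$, whence $\varphi_{jk}(3/2+it) = O(1)$ (indeed $O(|t|^{-1/2})$) for $|t|\ge 1$.

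To interpolate, I would apply the Phragm\'en--Lindel\"of theorem for a strip to the two regions $\Omega_{\pm} := \{1/2\le\Re(s)\le 3/2,\ \pm\Im(s)\ge 1\}$ separately (then use $\Omega = \Omega_+\cup\Omega_-$). On each $\Omega_\pm$, $\varphi_{jk}$ is holomorphic and bounded on all three boundary pieces: by $1$ on $\Re(s)=1/2$, by $O(1)$ on $\Re(s)=3/2$, and by a constant on the compact arc $\{\pm\Im(s)=1,\ 1/2\le\Re(s)\le 3/2\}$ (where $\varphi_{jk}$ is holomorphic, hence continuous). Phragm\'en--Lindel\"of then bounds $\varphi_{jk}$ by the maximum of these boundary values throughout $\Omega_\pm$, giving the desired uniform bound on $\Omega$. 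The main obstacle is the hypothesis needed to invoke Phragm\'en--Lindel\"of: one must know that $\varphi_{jk}$ has at most subexponential (in fact polynomial) growth in the interior of the strip. This is classical --- it comes out of the meromorphic-continuation machinery for Eisenstein series (\cite[Chapter~8]{He}), or alternatively can be read off from the polynomial growth of $E^{(j)}(z_0,s)$ in vertical strips for a fixed $z_0$ high in a cusp together with~\eqref{eqnFourExpInYEisen2} --- but it should be cited rather than reproved here.
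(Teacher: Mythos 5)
Your argument is correct in outline, but it does far more work than the paper: the paper's entire proof is the single citation \cite[Page 301, (a)]{He}, which directly asserts the uniform boundedness of $\varphi_{jk}(r+it)$ in this region. What you have written is essentially a reconstruction of the standard proof lying behind that citation: unitarity of $\Phi(1/2+it)$ on the critical line, absolute convergence of the Dirichlet-series expression on $\Re(s)=3/2$, absence of poles in $\Re(s)\ge 1/2$ off the real segment $(1/2,1]$, and Phragm\'en--Lindel\"of interpolation using the known polynomial growth of $\Phi$ in vertical strips. Two small points. First, the pointwise bound $N_{jk}(c)=O(c)$ is not what the general theory supplies; the standard input is the summatory bound $\sum_{c\le X}N_{jk}(c)=O(X^2)$, which is all you need for absolute convergence of $\sum_c N_{jk}(c)c^{-2s}$ at $\Re(s)=3/2$, so your conclusion stands but the intermediate claim should be phrased that way. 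Second, you correctly identify the polynomial-growth hypothesis for Phragm\'en--Lindel\"of as the one genuinely nontrivial input that must be cited rather than reproved; once that is granted, your interpolation argument is complete. The trade-off is clear: the paper's one-line citation is the most economical route, while your version makes the mechanism visible and is what one would write to make the lemma self-contained modulo only the meromorphic continuation theory.
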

\begin{proof}
Apply~\cite[Page 301, (a)]{He}.
\end{proof}

Following the proof scheme of~\cite[Proposition~4.2]{St}, we can now bound the Eisenstein series:
\begin{proof}[Proof of Theorem~\ref{propEisenBiggerHalf}]
We now give the proof for $\frac 3 2 \geq r > \frac 1 2$, leaving the proof for $r= \frac 1 2$ to the end.  Consider three cases:  $0<y<1$,  $1 \leq y \leq \frac{|t|}2$, and $\frac{|t|}2 < y$.

The first case is $0<y<1$.  Let us consider the range $\frac 3 2 \geq r > 1$ first.  By Lemmas~\ref{lemmScatMatUnifBnd} and~\ref{lemmUpperBndBessel}, we obtain the following upper bound for (\ref{eqnFourExpInYEisen2}):  \begin{align}\label{eqnUpBndEisenstein} O(y^{1-r}) + O\left(y^{1-r}e^{-|t|\frac \pi 2}|t|^{r-1} \right) \sum_{n=1}^\infty \left(|c_n| + |c_{-n}| \right) f(n)\end{align} where \[f(X) := \begin{cases} 1 &\textrm{ if } X < \frac {|t|}{4y} \\ e^{|t| \frac \pi 2 - 2 \pi X y} &\textrm{ if } X \geq \frac {|t|}{4y} \end{cases}.\]  Now define \[S(X) := \sum_{1 \leq |n| \leq X} |c_n|.\]

By the fact that $f(X)$ is continuous and monotonically decreasing, that \[\sum_{n=1}^\infty \left(|c_n| + |c_{-n}| \right)\] can be written as a telescoping sum, that $S(X)$ is a function of bounded variation on any closed interval, that $S(1/2)=0$, and that $f(X) S(X) \rightarrow 0$ as $X \rightarrow \infty$ (which follows from Theorem~\ref{lemmBndSumcnsquares} and the Cauchy-Schwarz inequality), we can apply the definition of the Riemann-Stieltjes integral to obtain the inequality and integration by parts to obtain the equality:  \begin{align}\label{eqnBndForCn}
\sum_{n=1}^\infty \left(|c_n| + |c_{-n}| \right) f(n) \leq \int_{1/2}^\infty f(X) \wrt S(X) = -\int_{1/2}^\infty f'(X) S(X) \wrt X.
\end{align}

To bound (\ref{eqnBndForCn}), it suffices to estimate $S(X)$ for $X \geq \frac {|t|}{4y}$ using Theorem~\ref{lemmBndSumcnsquares} and the Cauchy-Schwarz inequality:  \begin{align*}
 S(X) = O\left(e^{|t| \frac \pi 2}\right) \left(X^{r + 1/2}+X \sqrt{\omega(t)}\right).
  \end{align*}  Using calculus, we obtain \[\sum_{n=1}^\infty \left(|c_n| + |c_{-n}| \right) f(n) \leq O\left(e^{|t|\frac \pi 2}\right)\left(\left(\frac{|t|} y\right)^{r+1/2} +\frac{|t|} y \sqrt{\omega(t)}\right),\] which yields the desired result for the range $\frac 3 2 \geq r > 1$.
  
  For the desired result in the range $1 \geq r > \frac 1 2$, replace (\ref{eqnUpBndEisenstein}) with  \begin{align*} O(y^{1-r}) + O\left(y^{1-r}e^{-|t|\frac \pi 2} \right) \sum_{n=1}^\infty \left(|c_n| + |c_{-n}| \right) f(n)\end{align*} in the proof for the range $\frac 3 2 \geq r > 1$.  This proves the first case $0<y<1$.
  
The second case is $1 \leq y \leq \frac{|t|}2$.  Replace (\ref{eqnUpBndEisenstein}) with \begin{align*}\begin{cases} \delta_{j1}y^{r+it} + O(y^{1-r}) + O\left(\sqrt{y} e^{-|t|\frac \pi 2} \right) \sum_{n=1}^\infty \left(|c_n| + |c_{-n}| \right) f(n) & \textrm{ if } 1\geq r >  \frac 1 2\\ \delta_{j1}y^{r+it} + O(1) + O\left(\sqrt{y} e^{-|t|\frac \pi 2} |t|^{r-1}\right) \sum_{n=1}^\infty \left(|c_n| + |c_{-n}| \right) f(n) & \textrm{ if } \frac 3 2 \geq r > 1\end{cases}.\end{align*}  In the case $1 \leq y \leq \frac{|t|}2$, we have that  \begin{align*}
 S(X) = O\left(e^{|t| \frac \pi 2}\sqrt{y} \right) \left(X^{r + 1/2} y^{r-1/2}+X \sqrt{\omega(t)}\right),
  \end{align*} which yields  \[\sum_{n=1}^\infty \left(|c_n| + |c_{-n}| \right) f(n) \leq O\left(e^{|t|\frac \pi 2} y^{-1/2}\right)\left(|t|^{r+1/2} +|t| \sqrt{\omega(t)}\right)\] and the desired result for the second case $1 \leq y \leq \frac{|t|}2$.
  
  The third case is $\frac{|t|}2 < y$.  For $\frac 3 2 \geq r > \frac 1 2$, replace (\ref{eqnUpBndEisenstein}) with \begin{align*}\delta_{j1}y^{r+it} + O(y^{1-r}) + O\left(1\right) \sum_{n=1}^\infty \left(|c_n| + |c_{-n}| \right) f(n) \end{align*} and replace the previous $f(X)$ with \[f(X) = \frac{e^{-2\pi X y}}{\sqrt{2 \pi X}}.\]  In the case that $\frac{|t|}2 < y$, we have that \begin{align*}
 S(X) = O\left(e^{|t| \frac \pi 2} \right) \left(X^2 + \sqrt{|t|}X^{\frac 3 2} + \gamma(t) X + \gamma(t) \sqrt{|t|} X^{\frac1 2}\right),
  \end{align*} where $\gamma(t):= \sqrt{\omega(t)} +|t|$.  Then \begin{align*}\label{eqnBndForCn2}
\sum_{n=1}^\infty \left(|c_n| + |c_{-n}| \right) f(n) \leq \int_{1}^\infty f(X) \wrt S(X)
\end{align*} holds and the desired result for the third case  $\frac{|t|}2 < y$ now follows by calculus.  This completes the proof of the theorem for $\frac 3 2 \geq r > \frac 1 2$.

For $r=\frac 1 2$, the proof is analogous to that of $1 \geq r > \frac 1 2$, except we replace Theorem~\ref{lemmBndSumcnsquares} with~\cite[Proposition~4.1]{St}, yielding, for every $\varepsilon>0$, the following estimate for $S(X)$:  \[S(X) = O\left(e^{|t| \frac \pi 2} |t|^\varepsilon \sqrt{\omega(t)}X^{\frac 1 2 + \varepsilon} \sqrt{X + |t|}\right),\] which holds for every $X \geq \frac 1 2$.  With this change, the proofs of the three cases ($0<y<1$,  $1 \leq y \leq \frac{|t|}2$, and $\frac{|t|}2 < y$) are analogous. This completes the proof of the theorem.

\end{proof}

\end{document}